\newcommand{\OO}{\mathcal{O}}
\newcommand{\EE}{\mathcal{E}}
\newcommand{\image}{\textnormal{im}\,}
\newcommand{\kernel}{\textnormal{ker}\,}
\newcommand{\cokernel}{\textnormal{coker}\,}
\newcommand{\HHom}{\mathscr{H}om} 
\newcommand{\Hom}{\textnormal{Hom}}
\newcommand{\dimension}{\textnormal{dim}\,}
\newcommand{\codimension}{\textnormal{codim}\,}
\newcommand{\EExt}{\mathcal{E}xt}
\newcommand{\Eb}{{\ol{E}}}
\newcommand{\Ac}{\mathcal{A}}
\newcommand{\Fc}{\mathcal{F}}
\newcommand{\Gc}{\mathcal{G}}
\newcommand{\Lc}{\mathcal{L}}
\newcommand{\Qc}{\mathcal{Q}}
\newcommand{\Sc}{\mathcal{S}}
\newcommand{\Coh}{\mathrm{Coh}}
\newcommand{\Ap}{\mathcal{A}^p}
\newcommand{\arinj}{\ar@{^{(}->}}
\newcommand{\arsurj}{\ar@{->>}}
\newcommand{\areq}{\ar@{=}}
\newcommand{\ch}{\mathrm{ch}}
\newcommand{\wt}{\widetilde}
\newcommand{\ol}{\overline}
\newcommand{\Mor}{\mathcal{M}or}
\newcommand{\Quot}{{\mathrm{Quot}}}
\newtheorem*{rep@theorem}{\rep@title}
\newcommand{\newreptheorem}[2]{%
\newenvironment{rep#1}[1]{%
 \def\rep@title{#2 \ref{##1}}%
 \begin{rep@theorem}}%
 {\end{rep@theorem}}}
\newcommand\reallywidehat[1]{%
\savestack{\tmpbox}{\stretchto{%
  \scaleto{%
    \scalerel*[\widthof{\ensuremath{#1}}]{\kern-.6pt\bigwedge\kern-.6pt}%
    {\rule[-\textheight/2]{1ex}{\textheight}}
  }{\textheight}%
}{0.5ex}}%
\stackon[1pt]{#1}{\tmpbox}%
}
\begin{document}

\title[a Relation between higher-rank PT stable objects and quotients of coherent sheaves]{A relation between higher-rank PT stable objects and  quotients of coherent sheaves}

\author[Jason Lo]{Jason Lo}
\address{Department of Mathematics \\
California State University, Northridge\\
18111 Nordhoff Street\\
Northridge CA 91330 \\
USA}
\email{jason.lo@csun.edu}

\keywords{stable pair, PT stable object, quot scheme}
\subjclass[2010]{Primary 14J30; Secondary: 14D23}

\begin{abstract}
On a smooth projective threefold, we construct an essentially surjective functor $\mathcal{F}$ from a category of two-term complexes to a category of quotients of coherent sheaves, and describe the fibers of this functor.  Under a coprime assumption on rank and degree, the domain of $\mathcal{F}$ coincides with the category of higher-rank PT stable objects, which appear on one side of Toda's higher-rank DT/PT correspondence formula.  The codomain of $\mathcal{F}$ is the category of objects that appear on one side of another correspondence formula by Gholampour-Kool, between the generating series of topological Euler characteristics of two types of quot schemes.
\end{abstract}

\maketitle
\tableofcontents

\section{Introduction}

On a smooth projective threefold $X$, Gholampour-Kool computed the generating series of some moduli spaces of slope stable sheaves of homological dimension at most one \cite{gholampour2017higher}.  An integral part of their argument was the following counting formula, where $\Quot_X(-,n)$ denotes the quot scheme of length-$n$ quotients of a coherent sheaf, $e(-)$ denotes the topological Euler characteristic, and $M(q)=\prod_{n=1}^\infty \frac{1}{(1-q^n)^n}$ is the MacMahon fuction:

\begin{thm}\cite[Theorem 1.1]{gholampour2017higher}\label{thm:GK1-1}
For any rank $r$ torsion-free sheaf $F$ of homological dimension at most 1 on a smooth projective threefold $X$, we have
\begin{equation}\label{eq:GKmainthm}
  \sum_{n=0}^\infty e(\Quot_X (F,n))q^n = M(q)^{re(X)} \sum_{n=0}^\infty e(\Quot_X (\EExt^1 (F,\OO_X),n))q^n.
\end{equation}
\end{thm}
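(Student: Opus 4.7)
The plan is a local-to-global argument exploiting the motivic/power-structure behavior of Euler characteristics of quot schemes. A length-$n$ quotient $F\twoheadrightarrow Q$ is automatically supported on a finite subset of $X$, so the support morphism $\Quot_X(F,n)\to \mathrm{Sym}^n X$ has fibers that factor as products $\prod_i \Quot^{n_i}_{F,p_i}$ of punctual quot schemes. Via the standard G\"ottsche-type power structure on generating series of Euler characteristics, this produces the Euler-product identity
\begin{equation*}
\sum_{n\geq 0} e(\Quot_X(F,n))\,q^n \;=\; \prod_\alpha Z_\alpha(F,q)^{e(X_\alpha)},
\end{equation*}
where $X=\bigsqcup_\alpha X_\alpha$ is any constructible stratification on which the local isomorphism type of $F$ is constant, and $Z_\alpha(F,q):=\sum_m e(\Quot^m_{F,p})\,q^m$ for any $p\in X_\alpha$. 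Applying the same identity to $\EExt^1(F,\OO_X)$ reduces the theorem to the point-by-point claim
\begin{equation*}
Z_p(F,q) \;=\; M(q)^r\cdot Z_p\bigl(\EExt^1(F,\OO_X),q\bigr) \qquad\text{for every } p\in X.
\end{equation*}

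I expect this local identity to be the main obstacle. If $F$ is locally free of rank $r$ at $p$, then $\EExt^1(F,\OO_X)_p=0$ and the statement collapses to the known $M(q)^r$-formula for punctual quotients of $\OO_X^{\oplus r}$, proved by $T$-equivariant localization and counting $r$-colored 3d partitions. The content lies at points where $F$ fails to be locally free; here the assumption of homological dimension at most $1$ is essential, because it supplies a length-two local free resolution
\begin{equation*}
0\to E_1\xrightarrow{\varphi}E_0\to F_p\to 0,
\end{equation*}
which identifies $\EExt^1(F,\OO_X)_p\cong\cokernel(\varphi^\vee)$ as the precise local obstruction to freeness of $F$ at $p$.

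To prove the local identity I would choose a $T$-equivariant structure on the resolution, using a compatible torus action on the formal neighborhood of $p$, and analyze the $T$-fixed locus of $\Quot^n_{F,p}$. The expectation is that the fixed-point set decomposes combinatorially as a disjoint union indexed by pairs $\bigl((P_1,\ldots,P_r),\tau\bigr)$ where $(P_1,\ldots,P_r)$ is an $r$-tuple of 3d partitions tracking the ``generically locally free'' contribution and $\tau$ is a $T$-fixed length-$m$ quotient of $\EExt^1(F,\OO_X)_p$; summing over $n$ and using that a quasi-projective $T$-variety has the same topological Euler characteristic as its $T$-fixed locus then yields the factorization. Finally, substituting this local identity into the Euler-product formula and using additivity $\sum_\alpha r\,e(X_\alpha)=r\,e(X)$ pulls out the single factor $M(q)^{re(X)}$, while the remaining product reassembles, by the same G\"ottsche-type identity now applied to $\EExt^1(F,\OO_X)$, into $\sum_n e(\Quot_X(\EExt^1(F,\OO_X),n))\,q^n$, completing the proof.
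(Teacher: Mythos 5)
The paper you are looking at does not prove this statement: Theorem \ref{thm:GK1-1} is quoted verbatim from Gholampour--Kool and is only used as motivation, so there is no internal proof to measure your argument against; I will assess the proposal on its own terms. Your global-to-local reduction is fine and standard: the support map to $\mathrm{Sym}^nX$, the product structure of the fibers, constructibility of the punctual Euler characteristics, and additivity of $e$ over a constructible stratification do give the Euler-product identity, and they correctly isolate the punctual statement $Z_p(F,q)=M(q)^r\,Z_p(\EExt^1(F,\OO_X),q)$ as the entire content of the theorem (on the locally free locus $\EExt^1(F,\OO_X)=0$ and the known $M(q)^r$ formula for punctual quotients of $\OO^{\oplus r}$ applies, as you say).

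The genuine gap is the proof of that punctual identity at the points where $F$ fails to be locally free, which is exactly where the theorem has content. Your mechanism is to ``choose a $T$-equivariant structure on the resolution, using a compatible torus action on the formal neighborhood of $p$,'' but no such action exists in general: $X$ is an arbitrary smooth projective threefold and $F$ an arbitrary torsion-free sheaf with $\mathrm{hd}(F)\leq 1$, so the completed stalk $\wh{F}_p$ is an essentially arbitrary torsion-free module of projective dimension one over $\mathbb{C}[[x,y,z]]$ and need not admit any torus (even $\mathbb{C}^\ast$) equivariant structure; equivariance is special to toric models. Hence the statement ``$e$ of a $T$-variety equals $e$ of its fixed locus'' has nothing to apply to at precisely the relevant points. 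Moreover, even when a torus action is available, the asserted decomposition of the fixed locus of $\Quot^n_{F,p}$ into pairs (an $r$-tuple of $3$-dimensional partitions, a fixed quotient of $\EExt^1(F,\OO_X)_p$) is not derived from anything you set up --- it is essentially a restatement of the identity to be proved, and it is not clear why a fixed finite-length quotient of a non-free module should split into a ``free part'' and an ``$\EExt^1$ part.'' Any completion of the argument has to confront the local module directly, e.g.\ by dualizing the length-one free resolution $0\to E_1\xrightarrow{\varphi}E_0\to F_p\to 0$ and relating finite-length quotients of $F_p$ to finite-length quotients of $\cokernel(\varphi^\vee)\cong\EExt^1(F,\OO_X)_p$ together with quotients of a free module; this kind of duality mechanism, which needs no group action, is what underlies Gholampour--Kool's construction recalled in \ref{para:GKrank1constr} and its higher-rank extension in \ref{para:GKrank1constr-higherrk}, and it is the step your proposal leaves unproved.
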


On the other hand, on a smooth projective Calabi-Yau threefold $X$, Toda proved a correspondence formula between higher-rank Donaldson-Thomas (DT) and Pandharipande-Thomas (PT) invariants.  While DT invariants virtually count slope stable sheaves on $X$, PT invariants count PT stable objects in the derived category of coherent sheaves $D^b(X) = D^b(\Coh (X))$ on $X$.  PT stability is a type of polynomial stability on $D^b(X)$ in the sense of Bayer \cite{BayerPBSC}; a rank-one PT stable object of trivial determinant is exactly a stable pair
\begin{equation}\label{eq:PTstablepair}
\OO_X \overset{s}{\to} F
\end{equation}
 in the sense of Pandharipande-Thomas \cite{PT}, which we call a PT stable pair, where $F$ is a pure 1-dimensional sheaf and the cokernel of the morphism of sheaves $s$ is 0-dimensional.  The  properties of PT stable objects were studied and their moduli spaces constructed by the author in \cite{Lo1, Lo2, Lo4}.   For any  ample divisor $\omega$ on $X$ and any $(r,D,-\beta,-n) \in H^0(X) \oplus H^2(X) \oplus H^4(X) \oplus H^6(X)$ where $r \geq 1$ with $r,D\omega^2$ coprime, let us write  $\mathrm{DT} (r,D,-\beta, -n)$ to denote the DT invariant virtually counting $\mu_\omega$-stable sheaves of Chern character $(r,D,-\beta, -n)$, and $\mathrm{PT}(r,D,-\beta,-n)$ to denote  the PT invariant virtually counting PT stable objects of that Chern character.  Then  Toda's correspondence formula reads:

\begin{thm}\cite[Theorem 1.2]{Toda2}\label{thm:Toda1-2}
 For a fixed $(r,D,\beta)$, we have
\begin{equation}\label{eq:Todamainthm}
  \sum_{6n \in \mathbb{Z}} \mathrm{DT} (r,D,-\beta, -n)q^n = M( (-1)^r q)^{re(X)} \sum_{6n \in \mathbb{Z}} \mathrm{PT}(r,D,-\beta,-n)q^n.
\end{equation}
\end{thm}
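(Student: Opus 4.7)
The plan is to prove the DT/PT correspondence by wall-crossing in a family of polynomial Bridgeland stability conditions on $D^b(X)$, converted into a generating-series identity via the motivic Hall algebra and Joyce--Song type machinery. The central idea is that DT semistable sheaves and PT stable objects of the same Chern character both live inside a single abelian subcategory $\Ac \subset D^b(X)$ (for instance, a tilt of $\Coh(X)$ at an appropriate torsion pair), where they arise as semistable objects for two polynomial central charges that can be joined by a continuous path; along this path the only walls that can be crossed are those corresponding to extensions by $0$-dimensional sheaves, so the correspondence reduces to a computation of such extensions.

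First, I would set up the polynomial stabilities following Bayer and determine which objects are semistable on each side, using the hypothesis that $r$ and $D\omega^2$ are coprime to ensure that semistability equals stability on both sides, so that the relevant moduli stacks are $\mathbb{G}_m$-gerbes over schemes and their weighted Euler characteristics are honest numerical invariants. Next, I would encode the wall-crossing as a Hall algebra identity
\[
[\mathrm{DT}] \;=\; [\mathrm{PT}] \ast [\mathrm{Zero}]
\]
in the motivic Hall algebra of $\Ac$, where $[\mathrm{Zero}]$ is the class of the substack of objects obtained as extensions of a PT stable object by a $0$-dimensional sheaf, and then verify that both sides lie in a subalgebra on which Joyce's integration map (twisted by the Behrend function, available because $X$ is Calabi--Yau) is a well-defined ring homomorphism into the Euler-characteristic ring.

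It remains to evaluate the image of $[\mathrm{Zero}]$. For a PT stable object of rank $r$, the relevant $\Ext^1$ dimension against a $0$-dimensional sheaf scales with $r$, so the generating series of such extensions is, up to a sign arising from the Behrend function applied to a shift in $D^b(X)$, the $r$-th power of the MNOP series $M(q)^{e(X)}$ counting $0$-dimensional sheaves on $X$; keeping track of the sign yields the factor $M((-1)^r q)^{r e(X)}$. The main obstacle I expect is the higher-rank wall-crossing bookkeeping: in rank one a single universal wall separates DT from PT stability (Bridgeland, Toda), but for $r \geq 2$ one must exclude walls where higher-rank subobjects or quotients could destabilize the objects in question, and this is precisely where the coprime hypothesis on $(r, D\omega^2)$ enters, together with a careful analysis of the torsion allowed in $\Ac$.
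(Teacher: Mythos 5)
You should first note that Theorem \ref{thm:Toda1-2} is not proved in this paper at all: it is quoted verbatim from Toda's work \cite{Toda2}, so there is no internal proof to compare against. Measured against the actual proof in the literature, your outline does follow the right general strategy (motivic Hall algebra, Behrend-weighted integration map on the Calabi--Yau threefold, coprime hypothesis on $r$ and $\omega^2 D$ to force stability $=$ semistability so the moduli stacks are $\mathbb{G}_m$-gerbes and the invariants are honest weighted Euler characteristics, and the MacMahon factor $M((-1)^rq)^{re(X)}$ coming from extensions by $0$-dimensional sheaves). In that sense the skeleton is sound and matches Bridgeland's rank-one argument and Toda's higher-rank one.

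However, the step you flag as ``the main obstacle'' is precisely where your sketch has a genuine gap, and it is not closed by the coprime hypothesis alone. At higher rank there is no single wall separating DT from PT stability along which the only destabilizing objects are $0$-dimensional sheaves, and the single Hall-algebra identity $[\mathrm{DT}]=[\mathrm{PT}]\ast[\mathrm{Zero}]$ you posit is not what holds. Toda's proof instead interposes the auxiliary category $\langle \Coh_\mu (X), \mathcal{C}_{[0,\infty]}\rangle$ and the associated $L$-invariants (these are exactly the objects appearing in item (c) of Example \ref{eg:objformainthm} of this paper), and proves two separate Hall-algebra/wall-crossing formulas, one comparing DT invariants with the $L$-invariants and one comparing PT invariants with them; the MacMahon factor arises from combining the two. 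Moreover, the evaluation of the point contribution is not just ``$\Ext^1$ against a $0$-dimensional sheaf scales with $r$'': one needs the Behrend-function identities and Joyce--Song-type no-pole/integration results to convert the stack-theoretic identity into numerical invariants, a careful treatment of the sign $(-1)^r$, and foundational work to make sense of a motivic Hall algebra containing the relevant two-term complexes (openness and boundedness of the substacks involved, and a regularization because the stack of all $0$-dimensional sheaves is not of finite type). Without these ingredients your proposal is a plausible road map rather than a proof; with them, it is essentially Toda's argument.
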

The case $(r,D)=(1,0)$ of the formula \eqref{eq:Todamainthm}, i.e.\ rank-one DT/PT correspondence, was first conjectured in \cite{PT} and first proved by Bridgeland \cite{bridgeland2011hall}.  Toda also gave a proof under an assumption on the local structure of the moduli stacks involved \cite{toda2010curve}; the assumption was later removed in \cite{Toda2}.

In this article, we describe a relation between the  objects  that appears on the right-hand side of Gholampour-Kool's formula \eqref{eq:GKmainthm} and the  objects that appears on the right-hand side of Toda's formula \eqref{eq:Todamainthm}.  More precisely, on a smooth projective threefold $X$, we define a category $\EE_0$ of 2-term complexes in $D^b(X)$ with cohomology at degrees $-1, 0$, that contains all the PT semistable objects in $D^b(X)$.  The category $\EE_0$ also contains all the `frozen triples' in the sense of Sheshmani \cite{sheshmani2016higher}, which gives an alternative approach for  generalising Pandharipande-Thomas' stable pairs \eqref{eq:PTstablepair} to higher ranks.  We write $\Mor (\Coh (X))$ to denote the category where the objects are  morphisms of coherent sheaves on $X$, and morphisms are given by commutative squares in $\Coh (X)$.  For any coherent sheaf $A$ on $X$, let us  write $\Sc (A)$ to denote the full subcategory of $\Mor (\Coh (X))$ consisting of objects of the form $A \overset{q}{\to} Q$ where $Q$ is a 0-dimensional sheaf, and $q$ is a surjection of sheaves.  We construct a (contravariant) functor
\begin{equation*}
  \Fc : \EE_0 \to \coprod_{F \in \Coh (X), \mathrm{hd}(F)\leq 1} \Sc (\EExt^1 (F,\OO_X))^{op}
\end{equation*}
(see Definition \ref{para:mainthmprep1}) and prove our  main result:

\begin{thm}[Theorem \ref{thm:main1}]
The functor $\Fc$ is essentially surjective.  If we fix an ample class on $X$,  fix $r \in H^0(X), D \in H^2(X)$ such that $r,\omega^2D$ are coprime, restrict the domain of $\Fc$ to PT stable objects $E$ with $\ch_0(E)=-r, \ch_1(E)=-D$ and restrict the codomain by requiring $F$ above to be $\mu_\omega$-stable with $\ch_0(F)=r, \ch_1(F)=D$, then the restriction of $\Fc$ is also essentially surjective.
\end{thm}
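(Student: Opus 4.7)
The plan is to establish essential surjectivity of $\Fc$ in two stages: first without stability constraints, then under the coprime hypothesis. For the unrestricted statement, fix a codomain object $q\colon\EExt^1(F,\OO_X)\twoheadrightarrow Q$ with $F$ of homological dimension at most $1$ and $Q$ a $0$-dimensional sheaf. Set $T:=\EExt^3(Q,\OO_X)$; since $X$ is a smooth threefold and $Q$ is $0$-dimensional, $T$ is $0$-dimensional and satisfies the canonical double-duality isomorphism $\EExt^3(T,\OO_X)\cong Q$. There is a natural isomorphism
\[
  \Ext^2(T,F)\;\cong\;\Hom(\EExt^1(F,\OO_X),Q),
\]
verified via a locally free resolution $0\to F_1\to F_0\to F\to 0$ of $F$: the left side computes as $H^0(\mathrm{Tor}_1^{\OO_X}(Q,F))=\ker(F_1\otimes Q\to F_0\otimes Q)$ using $\RR\HHom(T,F)\simeq Q\Lo F[-3]$, while the right side equals the same kernel via the presentation $F_0^\vee\to F_1^\vee\twoheadrightarrow\EExt^1(F,\OO_X)$. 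Let $\eta\in\Ext^2(T,F)=\Hom_{D^b(X)}(T,F[2])$ correspond to $q$ under this isomorphism, and define $E$ by the distinguished triangle
\[
  F[1]\longrightarrow E\longrightarrow T\overset{\eta}{\longrightarrow}F[2].
\]
Then $\Hc^{-1}(E)=F$, $\Hc^0(E)=T$, and applying $\RR\HHom(-,\OO_X)$ together with the resulting long exact sequence of cohomology sheaves recovers the connecting map $\EExt^1(F,\OO_X)\to Q$ as $q$ by construction. Surjectivity of $q$ then forces $\Hc^3(E^\vee)=0$, placing $E$ in $\EE_0$ and yielding $\Fc(E)\cong q$.

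For the restricted statement, start with $\mu_\omega$-stable $F$ of rank $r$ and first Chern character $D$, together with a surjection $q$ as above; the same construction produces $E$ with $\ch_0(E)=-r,\ \ch_1(E)=-D$ and $\Hc^{-1}(E)=F,\ \Hc^0(E)=T$. The cohomology-level prerequisites for PT stability---$\Hc^{-1}(E)$ torsion-free of $\mathrm{hd}\le 1$ and $\Hc^0(E)$ zero-dimensional---are immediate. To exclude destabilising subobjects in the PT heart, I invoke the structural characterisations of PT (semi)stable objects from \cite{Lo1,Lo2}: any destabiliser $E'\hookrightarrow E$ induces, via the long exact sequence arising from the triangle $F[1]\to E\to T$, a nontrivial subsheaf of $F$ whose slope either contradicts the $\mu_\omega$-stability of $F$ outright or equals it, with the boundary case ruled out by the coprimality of $r$ and $\omega^2 D$. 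Hence $E$ is PT semistable, and coprimality upgrades semistability to stability.

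The main obstacle is the PT-stability verification in the restricted case. While the cohomology-level conditions come for free, ruling out every potentially destabilising PT-heart subobject of $E$ requires a careful case analysis of how such a subobject interacts with the cohomology sheaves $F$ and $T$ through the truncation triangle; $\mu_\omega$-stability and the coprime condition together eliminate the boundary semistable configurations. A secondary, purely technical point is to verify that the natural isomorphism $\Ext^2(T,F)\cong\Hom(\EExt^1(F,\OO_X),Q)$ truly matches the connecting map from the dualised triangle with the chosen $q$, which reduces to compatibility of canonical identifications from the local-to-global spectral sequence and the resolution-based computations.
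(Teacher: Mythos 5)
Your construction of the preimage is essentially the paper's: producing a two-term complex $E$ with $H^{-1}(E)=F$ and $H^0(E)=Q^\ast$ whose boundary map $H^0(E)\to H^{-1}(E)[2]$ dualises to $q$ is exactly what Lemma \ref{lem:AG44-97-lem1} does (there one completes the composite $F^\vee[1]\to \EExt^1(F,\OO_X)\overset{q}{\to}Q$ to a triangle and applies $(-)^\vee[3]$, which is the dual formulation of your extension class $\eta\in\Ext^2(Q^\ast,F)\cong \Hom(\EExt^1(F,\OO_X),Q)$; in that formulation the compatibility you flag as a ``secondary technical point'' is automatic), and the step ``$q$ surjective implies $\Hom(\Coh^{\leq 0}(X),E)=0$, hence $E\in\EE_0$'' is Lemma \ref{lem:surjiffCohleq0vanish}. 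Where you diverge, and where your sketch as written would not go through, is the restricted case. The paper performs no destabiliser analysis at all: under the coprime assumption, PT stability of an object of $\Ac^p$ is \emph{equivalent} to the three conditions ($H^{-1}$ torsion-free and $\mu_\omega$-semistable, $H^0$ zero-dimensional, $\Hom(\Coh^{\leq 0}(X),-)=0$) by \cite[Proposition 2.24]{Lo2}, recalled in \ref{para:PTstabobjproperties} and used in \ref{para:Fcrestrictions}; all three hold for the constructed $E$ by construction, so there is nothing left to check and your ``main obstacle'' disappears. Your proposed direct argument, by contrast, asserts that every destabilising subobject induces a nontrivial subsheaf of $F$ to which one compares slopes; this fails for rank-zero subobjects (subobjects lying in $\Coh^{\leq 1}(X)$, which have vanishing $H^{-1}$), and these are precisely the configurations that the zero-dimensionality of $H^0$ and the vanishing $\Hom(\Coh^{\leq 0}(X),E)=0$ are there to rule out; moreover PT stability compares polynomial phases, not merely $\mu_\omega$-slopes. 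So either invoke \cite[Proposition 2.24]{Lo2} as a black box, as the paper does, or commit to reproving that characterisation in full, including the rank-zero cases.
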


In Section \ref{sec:Fcfiber}, we analyse the fibers of the functor $\Fc$.  We describe how to enumerate all the objects in a given fiber of $\Fc$ in Lemma \ref{lem:AG44-136-1}.  For two objects $E, \ol{E}$ of $\EE_0$, we  pin down the difference between  $E, \ol{E}$ being isomorphic in $D^b(X)$ and $\Fc (E), \Fc (\ol{E})$ being isomorphic in $\Mor (\Coh (X))$.    Finally, we recall a construction mentioned in Gholampour-Kool's work in which  a sheaf quotient $\EExt^1 (I_C,\OO_X) \twoheadrightarrow Q$, where $I_C$ is the ideal sheaf of a Cohen-Macaulay curve $C$ on $X$ and $Q$ is a 0-dimensional sheaf, can be used to construct a PT stable pair (i.e.\ a rank-one PT stable object).  We generalise this construction to higher ranks in \ref{para:GKrank1constr-higherrk}, so that given a higher-rank sheaf quotient, we produce   a higher-rank PT stable object.  We end the article with Lemma \ref{lem:inversecompGK}, which compares this higher-rank construction and  the functor $\Fc$ constructed in Section \ref{sec:esssurj}.



\subsection{Acknowledgements} The author would like to thank Yunfeng Jiang, Martijn Kool and Zhenbo Qin for answering his various questions, and Jun Li and Ziyu Zhang for helpful discussions.

\section{Preliminaries}

\subsection{Notation}

Unless otherwise stated, we will write $X$ for a smooth projective threefold in this article, $\Coh (X)$ for the category of coherent sheaves on $X$, and  $D^b(X) = D^b(\Coh (X))$ for the bounded derived category of coherent sheaves on $X$.

\paragraph For any category $\mathcal{C}$, we will write $\Mor (\mathcal{C})$ to denote the category of morphisms in $\mathcal{C}$.  That is, the objects of $\Mor (\mathcal{C})$ are morphisms $f : A \to B$ in $\mathcal{C}$, and a morphism between two objects $f : A \to B, f' : A' \to B'$ of $\Mor (\mathcal{C})$ is a commutative diagram in $\mathcal{C}$
\[
\xymatrix{
  A \ar[r]^f \ar[d] & B \ar[d] \\
  A' \ar[r]^{f'} & B'
}.
\]

\paragraph For any object $E \in D^b(X)$ and any subcategory $\mathcal{C}$ of $D^b(X)$, we will write $\Hom (\mathcal{C},E)=0$ to mean $\Hom_{D^b(X)} (C,E)=0$ for all $C \in \mathcal{C}$, and similarly for $\Hom (E,\mathcal{C})=0$.

\paragraph For any integer $d$, we will write $\Coh^{\leq d}(X)$ to denote the Serre subcategory of $\Coh (X)$ consisting of  sheaves $E$  supported in dimension at most $d$.  We will also write $\Coh^{\geq d}(X)$ to denote the full subcategory of $\Coh (X)$ consisting of sheaves $E$ such that $\Hom (\Coh^{\leq d-1}(X),E)=0$, i.e.\ sheaves $E$ that have no subsheaves supported in dimension $d-1$ or lower.  Then we set $\Coh^{=d}(X) = \Coh^{\leq d}(X) \cap \Coh^{\geq d}(X)$, which is the category of pure $d$-dimensional sheaves on $X$.

\paragraph Given any object $E \in D^b(X)$ and any integer $i$, we will write $H^i(E)$ to denote the degree-$i$ cohomology of $E$ with respect to the standard t-structure on $D^b(X)$.  We then define
\[
  D^{\geq i}_{\Coh (X)} = \{ E \in D^b(X) : H^k(E)=0 \text{ for all }k < i \}
\]
and similarly  $D^{\leq i}_{\Coh (X)}$.  For any integers $i \leq j$, we  set
\[
  D^{[i,j]}_{\Coh (X)} = D^{\geq i}_{\Coh (X)} \cap D^{\leq j}_{\Coh (X)}.
\]

\paragraph Given a coherent sheaf $F$ on $X$, we will refer to the dimension (resp.\ codimension) of $\mathrm{supp}(F)$ simply as the dimension (resp.\ codimension) of $F$, and denote it as $\dimension F$ (resp.\ $\codimension F$).

\paragraph For any $F \in D^b(X)$, we will write $F^\vee$ to denote the derived dual $R\HHom (F,\OO_X)$ of $F$.  When $F$ is a coherent sheaf of codimension $c$, we will write $F^\ast$ to denote the usual sheaf dual of $F$, i.e.\ $\EExt^c(F,\OO_X)$; note that $H^c (F^\vee)= F^\ast$.  Given a pure codimension-$c$ coherent sheaf $F$ on $X$, we will say $F$ is reflexive if the natural injection $F \hookrightarrow F^{\ast \ast}$ is an isomorphism.

\paragraph Recall that the homological dimension  of a coherent sheaf $F$ on a smooth projective variety $X$ is defined to be the minimal length of a locally free resolution of $F$, and that a coherent sheaf of homological dimension $n$ satisfies $\EExt^i (F,\OO_X) =0$ for all $i>n$ and hence $F^\vee \in D^{[0,n]}_{\Coh (X)}$.  We will write $\mathrm{hd}(F)$ to denote the homological dimension of a coherent sheaf $F$ on $X$.


\paragraph[Stable pairs] On \label{para:PTstablepair} a smooth projective threefold $X$, a stable pair  in the sense of Pandharipande-Thomas \cite{PT} is a pure 1-dimensional sheaf $F$ together with a section $\OO_X \overset{s}{\to} F$ such that $\cokernel (s)$ is 0-dimensional.  The purity of $F$ implies that the support of $F$ is a Cohen-Macaulay curve.  We often think of a stable pair $\OO_X \overset{s}{\to} F$ as a 2-term complex representing an object in $D^b(X)$, with $F$ sitting at degree 0.  We will refer to a stable pair in the sense of \cite{PT} as a PT stable pair, or simply a stable pair.

\paragraph[PT stable objects] Bayer \label{para:PTstabToda} characterised PT stable pairs using the notion of polynomial stability in \cite{BayerPBSC}.  There is a particular polynomial stability $\sigma_{PT}$ on $D^b(X)$, referred to as PT stability by Bayer, such that the $\sigma_{PT}$-stable objects objects $E$  in the heart
\[
  \Ac^p := \langle \Coh^{\geq 2}(X)[1], \Coh^{\leq 1}(X)\rangle
\]
with $\ch_0 (E) = -1, \ch_1 (E)=0$ and $\det{E} = \OO_X$ are precisely the PT stable pairs  in \ref{para:PTstablepair}.  We will refer to $\sigma_{PT}$-(semi)stable objects in $\Ac^p$ of any Chern character as PT (semi)stable objects.   The properties of higher-rank PT stable objects and their moduli spaces were studied  in \cite{Lo1,Lo2}.

\subparagraph Suppose \label{para:PTstabobjproperties} $X$ is a smooth projective threefold, and $\omega$ is a fixed ample class on $X$ that appears in the definition of PT stability.  Then every PT semistable object $E$ with nonzero $\ch_0$ satisfies the following properties:
\begin{itemize}
\item[(i)] $H^{-1}(E)$ is torsion-free and $\mu_\omega$-semistable,
\item[(ii)] $H^0(E)$ is 0-dimensional,
\item[(iii)] $\Hom_{D^b(X)}(\Coh^{\leq 0}(X),E)=0$;
\end{itemize}
moreover, when $\ch_0(E)$ and $\omega^2 \ch_1(E)$ are coprime, every object in $\Ac^p$ satisfying (i) through (iii) is a PT stable object, and PT stability coincides with PT semistability \cite[Proposition 2.24]{Lo2}.  Also, properties (i) and (ii) implies that, if $E$ is a PT-semistable object, then $\ch_0(E) = -n$ for some nonnegative integer $n$;  we will sometimes refer to such an $E$ as a rank $n$ PT semistable object by abuse of notation.

\subparagraph  Under derived dual and up to a shift, PT stability corresponds to another polynomial stability $\sigma_{PT}^\ast$, meaning  $\sigma_{PT}$-stable objects and $\sigma_{PT}^\ast$-stable objects correspond to each other via derived dual.  We will refer to the $\sigma_{PT}^\ast$-(semi)stable objects as dual-PT (semi)stable objects; their properties and moduli spaces were studied in \cite{Lo4}.

\subparagraph  Suppose \label{para:dualPTstabobjproperties} $X$ is a smooth projective threefold, and $\omega$ is a fixed ample class on $X$ that appears in the definition of dual-PT stability.  Then a standard argument shows that every dual-PT semistable object $E$ with nonzero $\ch_0$ satisfies the following properties besides lying in $\Ac^p$:
\begin{itemize}
\item[(i)] $H^{-1}(E)$ is torsion-free and $\mu_\omega$-semistable.
\item[(ii)] $\Hom_{D^b(X)}(\Coh^{\leq 1}(X),E)=0$.
\end{itemize}
Property (ii) implies that $H^{-1}(E)$ is a reflexive sheaf.  Also, when $\ch_0(E)$ and $\omega^2 \ch_1(E)$ are coprime, every object in $\Ac^p$ satisfying (i) and (ii) is a dual-PT stable object, and dual-PT stability coincides with dual-PT semistability \cite[Lemma 3.5]{Lo4}.

\begin{rem}
In Toda's work \cite{Toda2}, he directly defines PT semistable objects  to be the objects in $D^b(X)$ satisfying properties (i) through (iii).   All the computations in \cite{Toda2}, however, are performed under the assumption that $\ch_0, \omega^2 \ch_1$ are coprime; under this assumption, the  PT semistable objects  Toda studies coincide with the PT semistable objects defined using  Bayer's polynomial stability  (as in \ref{para:PTstabToda}).
\end{rem}

\section{The dualising functor}

In this section, we study the behaviour of a class of  2-term complexes under the derived dual functor $^\vee$.  These 2-term complexes can be taken to be various stable objects (see Section \ref{sec:esssurj}) and, in particular, PT stable objects.

\begin{lem}\label{lem:Eveeleq3Coh0vanishing}
Let $E$ be an object of $D^b(X)$  satisfying $E^\vee \in D^{\leq 3}_{\Coh (X)}$.  Then
\[
  \Hom_{D^b(X)}(\Coh^{\leq 0}(X),E)=0 \text{ if and only if } H^3(E^\vee)=0.
\]
\end{lem}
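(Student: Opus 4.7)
The plan is to use the derived dual to transfer the problem from computing $\Hom$ out of $0$-dimensional sheaves into $E$ to computing $\Hom$ out of $E^\vee$ into dualised $0$-dimensional sheaves, at which point the hypothesis $E^\vee \in D^{\leq 3}_{\Coh (X)}$ truncates everything down to $H^3(E^\vee)$.

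First I would observe that every $T \in \Coh^{\leq 0}(X)$ is perfect (as a coherent sheaf on a smooth variety admitting a finite free resolution), so we have the adjunction
\[
  \Hom_{D^b(X)}(T,E) \;\cong\; \Hom_{D^b(X)}(E^\vee, T^\vee).
\]
Since $T$ has codimension $3$ on the threefold $X$, the sheaf-$\EExt$ vanishing gives $T^\vee \cong T^\ast[-3]$, where $T^\ast = \EExt^3(T,\OO_X) \in \Coh^{\leq 0}(X)$. Thus
\[
  \Hom (T,E) \;\cong\; \Hom_{D^b(X)}(E^\vee, T^\ast[-3]).
\]

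Next I would exploit $E^\vee \in D^{\leq 3}_{\Coh (X)}$ via the truncation triangle
\[
  \tau^{\leq 2}E^\vee \to E^\vee \to H^3(E^\vee)[-3] \to \tau^{\leq 2}E^\vee [1].
\]
Applying $\Hom(-,T^\ast[-3])$ and inspecting the two flanking groups by the standard spectral sequence $E_2^{p,q} = \Ext^p(H^{-q}(\tau^{\leq 2}E^\vee),T^\ast)$: the groups $\Hom(\tau^{\leq 2}E^\vee [1], T^\ast[-3])$ and $\Hom(\tau^{\leq 2}E^\vee, T^\ast[-3])$ are $\Ext^{-4}$ and $\Ext^{-3}$ respectively, which vanish because $\tau^{\leq 2}E^\vee$ has no cohomology in degree $\geq 3$. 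Hence
\[
  \Hom(T,E) \;\cong\; \Hom_{\Coh (X)}(H^3(E^\vee), T^\ast).
\]

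From here the two directions are immediate. The ``if'' direction is trivial: $H^3(E^\vee)=0$ kills the right-hand side for every $T$. For the ``only if'' direction, I would note that $H^3(E^\vee)$ is itself $0$-dimensional on a smooth threefold (apply the spectral sequence $\EExt^p(H^{-q}(E),\OO_X) \Rightarrow H^{p+q}(E^\vee)$; only $p=3$ can contribute at total degree $3$, and such $\EExt^3$ is supported in codimension $3$). Then I would choose $T = H^3(E^\vee)^\ast$, so that $T^\ast \cong H^3(E^\vee)$ by biduality for $0$-dimensional sheaves, and the hypothesis gives $\Hom(H^3(E^\vee), H^3(E^\vee))=0$, forcing $H^3(E^\vee)=0$.

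The only place requiring genuine care is verifying the two $\Ext$-vanishings in the truncation argument and invoking the perfectness of $T$ for the derived-dual adjunction; I do not anticipate a real obstacle, since both facts are standard on a smooth threefold.
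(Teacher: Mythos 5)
Your reduction is exactly the paper's argument: the paper's proof consists of the isomorphism $\Hom(T,E)\cong\Hom(E^\vee,T^\ast[-3])$ for $T\in\Coh^{\leq 0}(X)$ together with the (implicit) observation that for $E^\vee\in D^{\leq 3}_{\Coh (X)}$ this group is computed by the top cohomology, i.e.\ equals $\Hom(H^3(E^\vee),T^\ast)$; your truncation-triangle computation makes that observation explicit and is correct, as is the ``if'' direction.

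There is, however, a step in your ``only if'' direction that fails as written: the claim that $H^3(E^\vee)$ is $0$-dimensional. Your spectral-sequence justification (``only $p=3$ can contribute at total degree $3$'') silently assumes $E\in D^{\leq 0}_{\Coh (X)}$, which is not part of the hypothesis; when $E$ has cohomology in positive degrees, terms such as $\EExt^2(H^{-1+2}(E),\OO_X)$, i.e.\ $\EExt^p(H^{p-3}(E),\OO_X)$ with $p<3$, also land in total degree $3$ and can have $1$-dimensional support. Concretely, on $X=\PP^3$ take $E=I_L[2]$ for a line $L$: since $\mathrm{hd}(I_L)=1$ we have $E^\vee=I_L^\vee[-2]\in D^{[2,3]}_{\Coh (X)}$, so the hypothesis of the lemma holds, yet $H^3(E^\vee)=\EExt^1(I_L,\OO_X)\cong\EExt^2(\OO_L,\OO_X)$ is a nonzero sheaf supported on the whole of $L$. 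So your choice $T=H^3(E^\vee)^\ast$ need not lie in $\Coh^{\leq 0}(X)$ and biduality need not apply. The repair is short and stays within the same argument: if $H^3(E^\vee)\neq 0$, pick a closed point $x$ in its support, so that $S:=H^3(E^\vee)\otimes k(x)$ is a nonzero $0$-dimensional quotient of $H^3(E^\vee)$; since $(-)^\ast$ is an involution on $\Coh^{\leq 0}(X)$, set $T:=S^\ast\in\Coh^{\leq 0}(X)$, so $T^\ast\cong S$ and $\Hom(H^3(E^\vee),T^\ast)\neq 0$, contradicting the vanishing hypothesis. With that one-line fix your proof is complete and coincides with the paper's.
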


\begin{proof}
For any $E \in D^b(X)$ and $T \in \Coh^{\leq 0}(X)$ we have
\[
  \Hom (T,E) \cong \Hom  (E^\vee , T^\ast [-3]).
\]
Therefore, when $E$ satisfies $E^\vee \in D^{\leq 3}_{\Coh (X)}$, we have $H^3 (E^\vee)=0$ if and only if $\Hom (T,E)=0$ for all $T \in \Coh^{\leq 0}(X)$, i.e.\ $\Hom (\Coh^{\leq 0}(X), E)=0$.
\end{proof}

\begin{eg}\label{rem:HomCohleq0H3}
 For   any $E \in \langle \Coh^{\geq 1}(X)[1], \Coh^{\leq 1}(X)\rangle$, in the associated exact triangle
\[
  H^0(E)^\vee \to E^\vee \to H^{-1}(E)^\vee [-1] \to H^0(E)^\vee [1]
\]
we have $H^0(E)^\vee \in D^{[2,3]}_{\Coh (X)}$ and $H^{-1}(E)^\vee \in D^{[0,2]}_{\Coh (X)}$ \cite[Proposition 1.1.6]{HL} and hence $E^\vee \in D^{\leq 3}_{\Coh (X)}$.  Thus
\[
\Hom (\Coh^{\leq 0}(X),E)=0 \text{ if and only if } H^3(E^\vee)=0
\]
for such $E$ by Lemma \ref{lem:Eveeleq3Coh0vanishing}.
\end{eg}

\begin{lem}\label{lem:vanishingCoh01}
Suppose $X$ is a smooth projective threefold and $F \in \Coh^{\geq 2}(X)$.  Then
\begin{itemize}
\item[(i)] $F$ has homological dimension at most 1 if and only if
\begin{equation}\label{eq:hd1cond}
  \Hom (\Coh^{\leq 0}(X), F [1]) =0.
\end{equation}
\item[(ii)] If $F$ is torsion-free, then $F$ is reflexive  if and only if
\begin{equation}\label{eq:reflcond}
  \Hom (\Coh^{\leq 1}(X), F[1])=0.
\end{equation}
\end{itemize}
\end{lem}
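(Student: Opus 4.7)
For (i), the plan is to apply Example \ref{rem:HomCohleq0H3} to $E = F[1]$. Since $F \in \Coh^{\geq 2}(X) \subset \Coh^{\geq 1}(X)$, the complex $F[1]$ lies in $\langle \Coh^{\geq 1}(X)[1], \Coh^{\leq 1}(X)\rangle$, so that example yields $\Hom(\Coh^{\leq 0}(X), F[1]) = 0$ if and only if $H^3((F[1])^\vee) = H^3(F^\vee[-1]) = H^2(F^\vee) = \EExt^2(F, \OO_X) = 0$. To match this with $\mathrm{hd}(F) \leq 1$, it is enough to observe that $\EExt^3(F, \OO_X) = 0$ holds automatically for $F \in \Coh^{\geq 2}(X)$ on a smooth threefold: since $F$ has no $0$-dimensional subsheaf, $\mathrm{depth}\,F_x \geq 1$ at every closed point in the support, and the Auslander--Buchsbaum formula then forces $\mathrm{pd}\,F_x \leq 3 - 1 = 2$. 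Combined with the automatic vanishing of $\EExt^i$ for $i > 3$ on a smooth threefold, this gives $\mathrm{hd}(F) \leq 1 \iff \EExt^2(F, \OO_X) = 0$.

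For the backward direction of (ii), I would argue by contrapositive. If $F$ is torsion-free but not reflexive, then $Q := F^{\ast\ast}/F$ is a nonzero coherent sheaf supported in codimension $\geq 2$ (a torsion-free sheaf on a smooth threefold is locally free at every codimension $\leq 1$ point), so $Q \in \Coh^{\leq 1}(X)$. The exact sequence $0 \to F \to F^{\ast\ast} \to Q \to 0$ gives an exact triangle $F \to F^{\ast\ast} \to Q \overset{\delta}{\to} F[1]$ in $D^b(X)$, and the connecting morphism $\delta$ must be nonzero; otherwise the triangle would split and $F^{\ast\ast}$ would decompose as $F \oplus Q$, contradicting the torsion-freeness of the reflexive $F^{\ast\ast}$. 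Hence $\delta \in \Hom(Q, F[1])$ witnesses $\Hom(\Coh^{\leq 1}(X), F[1]) \neq 0$.

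For the forward direction of (ii), I would show that when $F$ is reflexive, every extension $0 \to F \to E \to G \to 0$ with $G \in \Coh^{\leq 1}(X)$ splits. Let $T \subseteq E$ be the torsion subsheaf. Since $F$ is torsion-free, $T \cap F = 0$ and hence $T$ embeds into $E/F = G$, so $T \in \Coh^{\leq 1}(X)$. The torsion-free quotient $E' := E/T$ fits in $0 \to F \to E' \to G/T \to 0$ with $G/T \in \Coh^{\leq 1}(X)$. The crux is the $S_2$-characterization of reflexive sheaves: letting $j : U \hookrightarrow X$ be the inclusion of $U := X \setminus \supp(E'/F)$, whose complement has codimension $\geq 2$, the inclusion $F \hookrightarrow E'$ is an isomorphism on $U$, and since $F$ is reflexive, $F = j_\ast(F|_U) = j_\ast(E'|_U) \supseteq E'$, forcing $E' = F$. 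Consequently $T = G$ inside $E$, so $E \cong F \oplus G$ and the extension splits.

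The main obstacle I anticipate is this last step, where the $S_2$ reduction is invoked; it relies on the nontrivial fact that sections of a reflexive sheaf extend across codimension $\geq 2$ subvarieties. An alternative route is to go entirely through derived duality: writing $\Hom(G, F[1]) \cong \Hom(F^\vee[-1], G^\vee)$, the truncation of $F^\vee \in D^{[0,1]}_{\Coh(X)}$ together with degree counting (using that $\Hom$ from a sheaf to a complex concentrated in strictly positive degrees vanishes) reduces the calculation to $\Hom(\EExt^1(F, \OO_X), \EExt^2(G, \OO_X))$. Then $\EExt^1(F, \OO_X)$ is $0$-dimensional by the $S_2$-characterization of reflexive sheaves, while $\EExt^2(G, \OO_X)$ is pure $1$-dimensional (shown via the purity filtration $0 \to G_0 \to G \to G_1 \to 0$ together with the fact that pure $1$-dimensional sheaves on a smooth threefold are Cohen--Macaulay), so the Hom vanishes.
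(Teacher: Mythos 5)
Your proof is correct, so let me compare it with the paper's. For part (i) you are essentially reproducing the paper's own argument: the paper dualises directly, notes $F^\vee \in D^{[0,2]}_{\Coh(X)}$ for $F \in \Coh^{\geq 2}(X)$ via \cite[Proposition 1.1.6]{HL}, and identifies the Hom-vanishing with $F^\vee \in D^{[0,1]}_{\Coh(X)}$, i.e.\ $\mathrm{hd}(F)\leq 1$; your route through Example \ref{rem:HomCohleq0H3} is the same dualisation, with the citation for $\EExt^3(F,\OO_X)=0$ replaced by a correct direct depth/Auslander--Buchsbaum argument. The genuine divergence is in part (ii), where the paper gives no argument at all and simply cites \cite[Lemma 4.20]{CL}, whereas you supply a self-contained proof: the contrapositive via the non-split extension $0 \to F \to F^{\ast\ast} \to Q \to 0$ with $Q \in \Coh^{\leq 1}(X)$ for one direction, and for the other the splitting of any extension $0 \to F \to E \to G \to 0$ with $G \in \Coh^{\leq 1}(X)$, reduced via the torsion subsheaf of $E$ to the statement $F \cong j_\ast(F|_U)$ for $U$ the complement of a codimension-$\geq 2$ locus. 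That extension-over-codimension-two property is precisely the classical input (Hartshorne's ``torsion-free and normal'' characterisation of reflexive sheaves) hiding behind the paper's citation, so your argument buys independence from the cited lemma at the cost of invoking this standard fact explicitly; your sketched alternative via derived duality --- reducing to $\Hom(\EExt^1(F,\OO_X), \EExt^2(G,\OO_X))=0$, with the source $0$-dimensional because a reflexive sheaf on a smooth threefold is locally free in codimension $2$ and the target pure $1$-dimensional --- is also sound (the truncation argument yields a surjection onto the Hom you need to kill, which suffices) and is closer in spirit to the duality toolkit the paper uses elsewhere.
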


\begin{proof}
(i) Taking derived dual, we observe that
\begin{equation}\label{eq:AG44-85-lem3-1}
\Hom (\Coh^{\leq 0}(X), F[1])=0 \text{\quad  if and only if \quad} \Hom (F^\vee, \Coh^{\leq 0}(X) [-2])=0
\end{equation}
for any $F \in D^b(X)$.   For   any $F \in \Coh^{\geq 2}(X)$, we know  $F^\vee \in D^{[0,2]}_{\Coh (X)}$ from \cite[Proposition 1.1.6]{HL}, and so $F$ satisfies  the equivalent conditions in \eqref{eq:AG44-85-lem3-1} if and only if $F^\vee \in D^{[0,1]}_{\Coh (X)}$, which in turn is equivalent to $F$ having homological dimension at most 1 \cite[III.6]{Harts}.


(ii) This is a special case of \cite[Lemma 4.20]{CL}.
\end{proof}



The results in the remainder of this  provide a  common ground across the  constructions in this article,  Toda's work \cite{Toda2},    Gholampour-Kool's work \cite{gholampour2017higher}, and the author's previous work \cite{Lo4}.

\begin{lem}\label{coro:AG44-84-1}
The category
\begin{equation}\label{AG44-84-last}
  \{ E \in \langle \Coh^{=3}(X)[1],\Coh^{\leq 1}(X)\rangle : \Hom (\Coh^{\leq 0}(X),E)=0 \}
\end{equation}
is invariant under the functor $(-^\vee)[2]$.  Moreover, $H^{-1}(E)$ is slope (semi-)stable if and only if $H^{-1}(E^\vee [2])$ is so.
\end{lem}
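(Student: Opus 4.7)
The plan is to apply the contravariant functor $(-)^\vee[2]$ to the truncation triangle
\[
  H^{-1}(E)[1] \to E \to H^0(E) \to H^{-1}(E)[2]
\]
and read off the cohomology sheaves of $E^\vee[2]$ from the resulting long exact sequence.  Since $H^{-1}(E)$ is pure torsion-free, \cite[Proposition 1.1.6]{HL} gives $H^{-1}(E)^\vee \in D^{[0,2]}_{\Coh (X)}$ with $\EExt^q(H^{-1}(E),\OO_X)$ of codimension $\geq q+1$ for $q \geq 1$.  Since $H^0(E) \in \Coh^{\leq 1}(X)$ has codimension $\geq 2$, the same reference gives $H^0(E)^\vee \in D^{[2,3]}_{\Coh (X)}$ with cohomology in codimension $\geq 2$.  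Chasing the long exact sequence then identifies $H^{-1}(E^\vee[2])$ as a subsheaf of the reflexive sheaf $H^{-1}(E)^*$ whose cokernel injects into $\EExt^2(H^0(E),\OO_X)$; identifies $H^0(E^\vee[2])$ as an extension of a subsheaf of $\EExt^1(H^{-1}(E),\OO_X)$ by a quotient of $\EExt^2(H^0(E),\OO_X)$; and places $H^1(E^\vee[2]) = H^3(E^\vee)$ inside $\EExt^2(H^{-1}(E),\OO_X)$, which is supported in dimension $0$.

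These identifications directly give $H^{-1}(E^\vee[2]) \in \Coh^{=3}(X)$ (a torsion-free subsheaf of a reflexive sheaf) and $H^0(E^\vee[2]) \in \Coh^{\leq 1}(X)$ (sandwiched between codimension-$\geq 2$ sheaves).  To kill $H^1(E^\vee[2])$, I would use that $\Coh^{=3}(X) \subset \Coh^{\geq 1}(X)$, so Example \ref{rem:HomCohleq0H3} applies to give $E^\vee \in D^{\leq 3}_{\Coh (X)}$; combined with the hypothesis $\Hom(\Coh^{\leq 0}(X),E)=0$, Lemma \ref{lem:Eveeleq3Coh0vanishing} forces $H^3(E^\vee)=0$.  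Hence $E^\vee[2] \in \langle \Coh^{=3}(X)[1], \Coh^{\leq 1}(X)\rangle$.  For the companion vanishing $\Hom(\Coh^{\leq 0}(X), E^\vee[2])=0$, I would apply Lemma \ref{lem:Eveeleq3Coh0vanishing} directly to $E^\vee[2]$: since $X$ is smooth, $(E^\vee[2])^\vee \cong E[-2] \in D^{[1,2]}_{\Coh (X)} \subset D^{\leq 3}_{\Coh (X)}$, and $H^3(E[-2]) = H^1(E) = 0$, so the lemma supplies the vanishing.

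For the slope-stability statement, the cokernel of $H^{-1}(E^\vee[2]) \hookrightarrow H^{-1}(E)^*$ has codimension $\geq 2$, so the two sheaves have equal rank and equal $\ch_1$; any saturated subsheaf of one extends by saturation to one of the other of the same rank and $\ch_1$, so slope (semi-)stability of $H^{-1}(E^\vee[2])$ is equivalent to that of $H^{-1}(E)^*$.  The remaining equivalence, between slope (semi-)stability of $H^{-1}(E)^*$ and that of $H^{-1}(E)$, I would obtain from the standard fact for torsion-free sheaves: passing to the reflexive hull $H^{-1}(E)^{**}$ is a codimension-$2$ modification, and dualization sends saturated subsheaves to saturated quotients with slope changing sign, so the stability inequalities transfer correctly.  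The subtlest point is this last equivalence, where one must track rank and $\ch_1$ through both the double-dual operation and the codimension-$2$ modifications in order to match the numerical inequalities defining $\mu$-(semi)stability on the two sides.
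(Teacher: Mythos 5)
Your proposal is correct and takes essentially the same route as the paper: dualize the truncation triangle, read off the cohomologies of $E^\vee[2]$ from the long exact sequence using \cite[Proposition 1.1.6]{HL}, kill $H^3(E^\vee)$ via the hypothesis $\Hom(\Coh^{\leq 0}(X),E)=0$ and Lemma \ref{lem:Eveeleq3Coh0vanishing}, get the companion vanishing from $H^3(E[-2])=0$, and transfer slope (semi)stability through $H^{-1}(E)^\ast$ using codimension-$2$ modifications and double duals. (One cosmetic slip: $H^3(E^\vee)$ surjects onto $\EExt^2(H^{-1}(E),\OO_X)$ with a possible contribution from $\EExt^3(H^0(E),\OO_X)$, rather than sitting inside it, but this is immaterial since you dispose of $H^3(E^\vee)$ by the Hom-vanishing argument anyway.)
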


\begin{proof}
Take any object $E$ in the  category \eqref{AG44-84-last}. If $H^{-1}(E)=0$, then $E=H^0(E)$ is a pure 1-dimensional sheaf, in which case $E^\vee [2] \cong \EExt^2(E,\OO_X)$ is also a pure 1-dimensional sheaf and hence again lies in the category \eqref{AG44-84-last}.  So let us suppose $H^{-1}(E) \neq 0$ from now on.  The exact triangle
\[
  H^0(E)^\vee \to E^\vee \to H^{-1}(E)^\vee [-1] \to H^0(E)^\vee [1]
\]
gives the long exact sequence
\begin{equation*}
\xymatrix{
0 \ar[r] & H^1(E^\vee) \ar[r] & H^{-1}(E)^\ast \ar `d[ll] `[dll] [dll] \\
\EExt^2 (H^0(E),\OO_X) \ar[r] & H^2(E^\vee) \ar[r] & \EExt^1 (H^{-1}(E),\OO_X) \ar `d[ll] `[dll] [dll] \\
\EExt^3 (H^0(E),\OO_X) \ar[r] & H^3 (E^\vee) \ar[r] & \EExt^2 (H^{-1}(E),\OO_X) \ar `d[ll] `[dll] [dll] \\
0  \ar[r]  & \cdots &
}
\end{equation*}
from which we see $H^{-1}(E^\vee [2]) = H^1(E^\vee)$ is a subsheaf of a torsion-free sheaf, and hence is torsion-free.  We also have $H^3(E^\vee)=0$ by  Example \ref{rem:HomCohleq0H3}.  Since $H^{-1}(E)$ is torsion-free, it follows that $\EExt^1 (H^{-1}(E),\OO_X) \in \Coh^{\leq 1}(X)$ \cite[Proposition 1.1.6 ii)]{HL}.  On the other hand, since $H^0(E) \in \Coh^{\leq 1}(X)$, we have  $\EExt^2 (H^0(E),\OO_X) \in \Coh^{\leq 1}(X)$.  Hence $H^2(E^\vee)=H^0(E^\vee [2]) \in \Coh^{\leq 1}(X)$, giving us   $E^\vee [2] \in \langle \Coh^{=3}(X)[1],\Coh^{\leq 1}(X)\rangle$ overall.

Let us  write $F = E^\vee [2]$.  Then  $H^3(F^\vee) = H^3(E[-2])=0$, and so $\Hom (\Coh^{\leq 0}(X),F)=0$ by Example \ref{rem:HomCohleq0H3}, i.e.\ $F$ lies in \eqref{AG44-84-last}.

Lastly, if $H^{-1}(E)$ is slope (semi-)stable then so is its dual $H^{-1}(E)^\ast$; since $\EExt^2 (H^0(E),\OO_X)$ has codimension at least 2, this means that  $H^1(E^\vee)=H^{-1}(F)$ is also slope (semi-)stable.  On the other hand, if $H^{-1}(F)=H^1(E^\vee)$ is slope (semi-)stable, then so is $H^{-1}(E)^\ast$ from the long exact sequence above, implying $H^{-1}(E)^{\ast \ast}$ is also slope (semi-)stable.  Since $H^{-1}(E)$ and $H^{-1}(E)^{\ast \ast}$ are isomorphic in codimension 1, this means that $H^{-1}(E)$ itself is slope (semi-)stable.
\end{proof}

\paragraph The category $\Coh^{=1}(X)$ of pure 1-dimensional coherent sheaves on $X$ is  invariant under the functor $(-^\vee)[2]$ by \cite[p.6]{HL}.  As a result, for any $E \in D^{\leq 0}_{\Coh (X)}$ and $T \in \Coh^{=1}(X)$, we have the isomorphisms
\begin{equation}\label{eq:AG44-128-1}
  \Hom (T,E^\vee [2]) \cong \Hom (E,T^\vee [2]) \cong \Hom (E,T^\ast) \cong \Hom (H^0(E),T^\ast)
\end{equation}
where the first isomorphism follows from taking derived dual, and the last isomorphism uses the assumption $E \in D^{\leq 0}_{\Coh (X)}$.

\begin{lem}\label{lem:AG44-127-star}
The functor $(-^\vee)[2]$ induces an equivalence of subcategories of \eqref{AG44-84-last}
    \begin{multline}
      \{ E \in \langle \Coh^{=3}(X)[1], \Coh^{\leq 0}(X)\rangle : \Hom (\Coh^{\leq 0}(X),E)=0\} \overset{\thicksim}{\to}  \\
      \{ E \in \langle \Coh^{=3}(X)[1], \Coh^{\leq 1}(X)\rangle: \Hom (\Coh^{\leq 1}(X), E) =0 \}. \label{eq:AG44-124-5}
    \end{multline}
\end{lem}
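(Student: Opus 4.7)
The plan is to exhibit $(-^\vee)[2]$ as mutually inverse contravariant equivalences between the two sides of \eqref{eq:AG44-124-5}: since double dual is naturally isomorphic to the identity on $D^b(X)$, it suffices to show that $(-^\vee)[2]$ sends the left-hand side into the right-hand side, and vice versa. By Lemma \ref{coro:AG44-84-1}, the ambient category \eqref{AG44-84-last} is already invariant under $(-^\vee)[2]$, so on both sides we automatically retain the conditions $E^\vee[2] \in \langle \Coh^{=3}(X)[1], \Coh^{\leq 1}(X)\rangle$ and $\Hom(\Coh^{\leq 0}(X), E^\vee[2]) = 0$. The two remaining content points are:
(i) if $H^0(E) \in \Coh^{\leq 0}(X)$, then $\Hom(\Coh^{\leq 1}(X), E^\vee[2]) = 0$; and
(ii) if $\Hom(\Coh^{\leq 1}(X), E) = 0$, then $H^0(E^\vee[2]) \in \Coh^{\leq 0}(X)$.

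For (i), the key tool is \eqref{eq:AG44-128-1}: for any $T \in \Coh^{=1}(X)$, $\Hom(T, E^\vee[2]) \cong \Hom(H^0(E), T^\ast)$. Since $H^0(E) \in \Coh^{\leq 0}(X)$ while $T^\ast \in \Coh^{=1}(X)$, there are no nonzero maps, so $\Hom(\Coh^{=1}(X), E^\vee[2]) = 0$. Combining this with the already known vanishing $\Hom(\Coh^{\leq 0}(X), E^\vee[2]) = 0$ and applying $\Hom(-, E^\vee[2])$ to the torsion filtration $0 \to T_0 \to T \to T_1 \to 0$ of an arbitrary $T \in \Coh^{\leq 1}(X)$ (with $T_0 \in \Coh^{\leq 0}(X)$ the maximal $0$-dimensional subsheaf and $T_1$ pure $1$-dimensional or zero) yields the required vanishing.

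For (ii), I would apply \eqref{eq:AG44-128-1} with $E$ replaced by $E^\vee[2]$ and invoke the natural identification $(E^\vee[2])^\vee[2] \cong E$: for any $T \in \Coh^{=1}(X)$,
\[
\Hom(T, E) \;\cong\; \Hom\!\bigl(T, (E^\vee[2])^\vee[2]\bigr) \;\cong\; \Hom\!\bigl(H^0(E^\vee[2]), T^\ast\bigr).
\]
The hypothesis $\Hom(\Coh^{\leq 1}(X), E) = 0$ forces the left-hand side to vanish, so $\Hom(H^0(E^\vee[2]), T^\ast) = 0$ for every pure $1$-dimensional $T^\ast$. Since $H^0(E^\vee[2])$ is already known to lie in $\Coh^{\leq 1}(X)$ by Lemma \ref{coro:AG44-84-1}, and any nonzero sheaf in $\Coh^{\leq 1}(X)$ that is not $0$-dimensional admits a nonzero pure $1$-dimensional quotient (via its torsion filtration), we conclude $H^0(E^\vee[2]) \in \Coh^{\leq 0}(X)$, completing the check.

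The only subtle point, and hence the main obstacle, is recognising that \eqref{eq:AG44-128-1} admits a dual reading: it transfers information between a test object $T$ and the cohomology $H^0$ across the duality, and applying it to $E^\vee[2]$ rather than $E$ is precisely what converts the hypothesis $\Hom(\Coh^{\leq 1}(X), E) = 0$ into the desired statement that $H^0(E^\vee[2])$ has no $1$-dimensional part. All remaining steps are formal once this is in place.
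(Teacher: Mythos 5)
Your proof is correct and follows essentially the same route as the paper: both directions reduce, via Lemma \ref{coro:AG44-84-1}, to checking the extra vanishing/dimension condition, and both checks are exactly the two readings of \eqref{eq:AG44-128-1} (applied once to $E$ and once to $E^\vee[2]$, using $E^{\vee\vee}\cong E$ and the reflexivity of pure $1$-dimensional sheaves). The paper's own proof is just a terser version of the same argument, with your torsion-filtration and pure-quotient steps left implicit.
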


All the PT stable objects (resp.\ dual-PT stable objects) lie in the left-hand side (resp.\ right-hand side) of \eqref{eq:AG44-124-5}.  As a result, Lemma \ref{lem:AG44-127-star} can be considered as a purely homological version of the statement that `PT stable objects and dual-PT stable objects correspond to each other under derived dual'.  We also note that this Lemma had essentially appeared in Piyaratne-Toda's work \cite[Lemma 4.16]{piyaratne2015moduli} in their study of the moduli spaces of Bridgeland semistable objects on threefolds.

Also, by Lemma \ref{lem:vanishingCoh01}, all the  torsion-free coherent sheaves of homological dimension at most 1 (resp.\ torsion-free reflexive sheaves) sitting at degree $-1$  lie in the left-hand side (resp.\ right-hand side) of \eqref{eq:AG44-124-5}.

\begin{proof}
Suppose $E$ is an object in the left-hand side of \eqref{eq:AG44-124-5}.  By Lemma \ref{coro:AG44-84-1}, it suffices to show that $\Hom (\Coh^{=1}(X),E^\vee [2])=0$.  For any $T \in \Coh^{=1}(X)$, we have $\Hom (T,E^\vee [2]) \cong \Hom (H^0(E),T^\ast)$ by \eqref{eq:AG44-128-1};  the latter Hom  vanishes since $H^0(E)\in \Coh^{\leq 0}(X)$ by assumption while  $T^\ast$ is pure 1-dimensional.

Conversely, suppose $E$ is an object in the right-hand side of \eqref{eq:AG44-124-5}.   Again, by Lemma \ref{coro:AG44-84-1}, it suffices to show that $H^0(E^\vee [2]) \in \Coh^{\leq 0}(X)$.  Since Lemma \ref{coro:AG44-84-1} already gives  $H^0(E^\vee [2])\in \Coh^{\leq 1}(X)$, it suffices to show  $\Hom (H^0(E^\vee [2]), \Coh^{=1}(X))=0$.  To this end, take any $T \in \Coh^{=1}(X)$; note that $T$ is reflexive.  Then  \eqref{eq:AG44-128-1} gives $\Hom (T^\ast, E) \cong \Hom (H^0(E^\vee [2]),T)$, in which the former Hom  vanishes because $\Hom (\Coh^{\leq 1}(X),E)=0$ by assumption.
\end{proof}


\begin{cor}\label{cor:2}
The category
\begin{equation}\label{eq:invcat2}
  \{ E \in \langle \Coh^{=3}(X)[1], \Coh^{\leq 0}(X)\rangle : \Hom (\Coh^{\leq 1}(X), E)=0\}
\end{equation}
is invariant under the functor $-^\vee [2]$.
\end{cor}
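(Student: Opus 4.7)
The plan is to identify the category \eqref{eq:invcat2} with the intersection of the two subcategories that appear on either side of the equivalence in Lemma \ref{lem:AG44-127-star}, and then exploit the fact that $(-^\vee)[2]$ is an involution on $D^b(X)$.

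First I would set up notation. Denote by $\Lc$ and $\Rc$ the left-hand and right-hand categories in \eqref{eq:AG44-124-5}, respectively. The claim is that \eqref{eq:invcat2} $=\Lc\cap\Rc$. Indeed, if $E\in\Lc\cap\Rc$ then $H^{-1}(E)\in\Coh^{=3}(X)$, $H^0(E)\in\Coh^{\leq 0}(X)\cap\Coh^{\leq 1}(X)=\Coh^{\leq 0}(X)$, and $\Hom(\Coh^{\leq 1}(X),E)=0$, so $E$ lies in \eqref{eq:invcat2}. Conversely, an object $E$ of \eqref{eq:invcat2} automatically satisfies $\Hom(\Coh^{\leq 0}(X),E)=0$ because $\Coh^{\leq 0}(X)\subseteq\Coh^{\leq 1}(X)$, so $E\in\Lc$; and since $H^0(E)\in\Coh^{\leq 0}(X)\subseteq\Coh^{\leq 1}(X)$ together with the vanishing condition place it in $\Rc$ as well.

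Next I would record that $(-^\vee)[2]$ is involutive on $D^b(X)$: smoothness of $X$ gives biduality $E^{\vee\vee}\cong E$, and hence $((E^\vee)[2])^\vee[2]\cong E^{\vee\vee}\cong E$.

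Combining these two observations finishes the proof. By Lemma \ref{lem:AG44-127-star} the functor $(-^\vee)[2]$ restricts to an equivalence $\Lc\xrightarrow{\thicksim}\Rc$; because it is an involution, its inverse is itself, and so it also restricts to an equivalence $\Rc\xrightarrow{\thicksim}\Lc$. For any $E\in\Lc\cap\Rc$, membership in $\Lc$ gives $E^\vee[2]\in\Rc$, while membership in $\Rc$ gives $E^\vee[2]\in\Lc$; therefore $E^\vee[2]\in\Lc\cap\Rc=$ \eqref{eq:invcat2}, proving the stated invariance.

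I do not expect any real obstacle: all the homological content is already packaged into Lemma \ref{lem:AG44-127-star} and biduality, and the only thing to verify directly is the identification of \eqref{eq:invcat2} with $\Lc\cap\Rc$, which is a purely set-theoretic unpacking of the definitions.
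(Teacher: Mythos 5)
Your proposal is correct and is essentially the argument the paper intends: Corollary \ref{cor:2} is stated as an immediate consequence of Lemma \ref{lem:AG44-127-star}, the point being exactly that \eqref{eq:invcat2} is the intersection of the two sides of \eqref{eq:AG44-124-5} and that $(-^\vee)[2]$, being an involution (by biduality on the smooth variety $X$), interchanges those two sides. Your identification of \eqref{eq:invcat2} with the intersection and the involution argument for the reverse direction match the paper's (implicit) proof, so nothing further is needed.
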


Note that the category \eqref{eq:invcat2} contains all the  reflexive sheaves (shifted by 1); in fact, it contains all the objects that are PT semistable and dual-PT semistable at the same time, the moduli space of which was studied in \cite[Theorem 1.2]{Lo4} with a coprime assumption on the rank and degree of the objects.

\section{A functor taking objects to morphisms}

In this section, we construct a functor $\wt{\Fc}$ that takes a subcategory $\EE$ of  $E \in D^{[-1,0]}_{\Coh (X)}$ into the category $\Mor (D^b(X))$ (Proposition \ref{prop:Fissurjfunctor}).  Composing with the cohomology functor $H^0$, we  obtain a functor from $\EE$ into $\Mor (\Coh (X))$ (see \ref{para:wtFcwithH0}).

\paragraph We define the full subcategory of $D^b(X)$
\[
\EE  = \{ E \in D^{[-1,0]}_{\Coh (X)} : \mathrm{hd}(H^{-1}(E))\leq 1, H^0(E) \in \Coh^{\leq 0}(X)\}
\]
and  the full subcategory of $\Mor (D^b(X))$
\begin{equation*}\label{eq:obinHc}
  \Lc = \{ A^\vee [1] \overset{m}{\to} B : A \in \Coh (X), \mathrm{hd}(A) \leq 1, B \in \Coh^{\leq 0}(X)\}.
\end{equation*}

\subparagraph Note \label{para:hdleqmeansCohgeq2} that for any coherent sheaf $F$ on $X$, the condition $\mathrm{hd}(F) \leq 1$ implies $\Coh^{\geq 2}(X)$ by \cite[Proposition 1.1.6]{HL}.  Hence $H^{-1}(E) \in \Coh^{\geq 2}(X)$ for any $E \in \EE$.

\paragraph For \label{para:defFcfunctor} any $E \in \EE$, truncation functors with respect to the standard t-structure on $D^b(X)$ give  the canonical exact triangle
\begin{equation}\label{eq:Fdef-eq1}
  H^{-1}(E) [1] \to E \to H^0(E) \overset{w}{\to} H^{-1}(E)[2].
\end{equation}
Applying the derived dual functor to \eqref{eq:Fdef-eq1} followed by the shift functor $[2]$ gives the exact triangle
\begin{equation}\label{eq:Fdef-eq2}
H^0(E)^\vee[2] \to E^\vee[2] \to H^{-1}(E)^\vee [1] \overset{w^\vee[3]}{\xrightarrow{\hspace{0.8cm}}} H^0(E)^\vee [3]
\end{equation}
where $H^{-1}(E)^\vee [1] \in D^{[-1,0]}_{\Coh (X)}$ by the assumption $\mathrm{hd}(H^{-1}(E))\leq 1$, and $H^0(E)^\vee [3] \in \Coh^{\leq 0} (X)$.  That is, the morphism $w^\vee [3]$ is an object of $\Lc$.

\begin{defn}
For any $E \in \EE$, we define $\wt{\Fc} (E)$ to be the object $w^\vee [3]$ of $\Lc$ in the notation of \ref{para:defFcfunctor}.
\end{defn}

\begin{prop}\label{prop:Fissurjfunctor}
$\wt{\Fc}$ is an essentially surjective (contravariant) functor from $\EE$ to $\Lc$, and induces a bijection between the isomorphism classes in $\EE$ and $\Lc$.
\end{prop}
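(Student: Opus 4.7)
The plan is to first verify that $\wt{\Fc}$ extends to a well-defined contravariant functor, then to construct a preimage in $\EE$ for each object of $\Lc$ (essential surjectivity), and finally to promote this to a bijection on isomorphism classes via the five-lemma for triangulated categories.

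First I would check functoriality. The truncation triangle \eqref{eq:Fdef-eq1} is natural in $E$, so any morphism $f : E \to E'$ in $\EE$ induces a commutative square with rows $w$ and $w'$ and vertical arrows $H^0(f)$ and $H^{-1}(f)[2]$. Applying the contravariant auto-equivalence $(-)^\vee[3]$ of $D^b(X)$ to this square yields a commutative square in $\Mor (D^b(X))$ from $\wt{\Fc}(E')$ to $\wt{\Fc}(E)$, and by paragraph \ref{para:defFcfunctor} this square lies in $\Lc$.

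For essential surjectivity, take any object $m : A^\vee [1] \to B$ of $\Lc$. Since $B \in \Coh^{\leq 0}(X)$, local duality on the smooth threefold $X$ gives $B^\vee \cong B^\ast [-3]$ and the canonical morphism $B \to B^{\ast\ast}$ is an isomorphism, so $(-)^\vee [3]$ induces a bijection
\[
\Hom (B^\ast, A[2]) \overset{\sim}{\to} \Hom (A^\vee [1], B).
\]
Let $w$ correspond to $m$ under this bijection and define $E$ by the exact triangle
\[
A[1] \to E \to B^\ast \xrightarrow{w} A[2].
\]
The long exact sequence in cohomology gives $H^{-1}(E) \cong A$ and $H^0(E) \cong B^\ast$; combined with $\mathrm{hd}(A) \leq 1$ and $B^\ast \in \Coh^{\leq 0}(X)$, this shows $E \in \EE$. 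The above triangle is then (canonically isomorphic to) the truncation triangle of $E$, so its connecting map equals $w$, and hence $\wt{\Fc}(E) = w^\vee [3] \cong m$ in $\Lc$.

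For the bijection on isomorphism classes, the preceding paragraph already gives surjectivity; injectivity follows from the five-lemma for triangulated categories. Specifically, an isomorphism $\wt{\Fc}(E) \cong \wt{\Fc}(E')$ in $\Lc$ is a commutative square with vertical isomorphisms; applying the quasi-inverse $(-)^\vee[-3]$ returns a commutative square with isomorphic vertical arrows linking $w$ and $w'$. By axiom TR3 this extends to a morphism of the truncation triangles of $E$ and $E'$, and since two of the three vertical maps are isomorphisms, the induced third map $E \to E'$ is an isomorphism. The main obstacle will be the careful bookkeeping of shifts together with the identification $(B^\ast)^\vee [3] \cong B$, which relies on local duality for 0-dimensional sheaves on $X$; the rest reduces to routine manipulation of exact triangles and the standard t-structure.
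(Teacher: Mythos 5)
Your argument is correct and follows essentially the same route as the paper's proof: functoriality from the naturality of the truncation triangle followed by dualizing, essential surjectivity by realizing the preimage as the extension $A[1]\to E\to B^\ast\xrightarrow{w}A[2]$ and identifying it with the truncation triangle (the paper equivalently takes the cone of $m$ first and then dualizes that triangle, invoking the same canonical identification), and injectivity on isomorphism classes by completing the commutative square to a morphism of exact triangles and using that two vertical isomorphisms force the third. One notational nit: the quasi-inverse of the contravariant functor $(-)^\vee[3]$ is again $(-)^\vee[3]$, since $(E^\vee[3])^\vee[3]\cong E^{\vee\vee}\cong E$, not $(-)^\vee[-3]$; this does not affect the substance of your argument.
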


\begin{proof}
  \textbf{Step 1.} Given any morphism $E_1 \overset{f}{\to} E_2$ in $\EE$, the truncation functors give a morphism of exact triangles in $D^b(X)$
  \[
  \xymatrix{
  H^{-1}(E_1)[1] \ar[r] \ar[d] & E_1 \ar[r] \ar[d]^f & H^0(E_1) \ar[r]^(.45){w_1} \ar[d]^{H^0(f)} & H^{-1}(E_1)[2] \ar[d]^{H^{-1}(f)[2]} \\
  H^{-1}(E_2)[1] \ar[r] & E_2 \ar[r] & H^0(E_2) \ar[r]^(.45){w_2} & H^{-1}(E_2)[2]
}.
\]
Applying the derived dual functor followed by  the shift functor $[2]$ gives
\[
\xymatrix{
  H^0(E_2)^\vee[2] \ar[r] \ar[d]  & E_2^\vee[2] \ar[r] \ar[d]^{f^\vee[2]} & H^{-1}(E_2)^\vee [1] \ar[r]^{w_2^\vee [3]}  \ar[d] &   H^0(E_2)^\vee [3] \ar[d] \\
  H^0(E_1)^\vee[2] \ar[r]  & E_1^\vee[2] \ar[r]  & H^{-1}(E_1)^\vee [1] \ar[r]^{w_1^\vee [3]}  & H^0(E_1)^\vee [3]
}.
\]
The right-most square now gives a morphism from $w_1^\vee [3]$ to $w_2^\vee [3]$ in $\Lc$.  It is clear that $\wt{\Fc}$ respects composition of morphisms in $\EE$, and so $\wt{\Fc}$ is a functor from $\EE$ to $\Lc$.  

\textbf{Step 2.} To show the essential surjectivity of $\wt{\Fc}$, let us take an arbitrary  element of $\Lc$, say the diagram in $D^b(X)$
\[
  A^\vee [1] \overset{m}{\to} B
\]
where $A$ is a  sheaf of homological dimension at most 1, and $B$ is a sheaf supported in dimension 0.   We first complete $m$ to an exact triangle in $D^b(X)$
\begin{equation}\label{eq:Scobjcone-eq1}
A^\vee [1] \overset{m}{\to} B \to C \to A^\vee [2].
\end{equation}
Applying $[-3]^\vee$ now gives the exact triangle
\[
  A[1] \to G \to B^\vee [3] \overset{m^\vee [3]}{\longrightarrow} A[2]
\]
where  $G := C^\vee [3]$ is an object of $\EE$.  Since $A[1] \in D^{\leq -1}_{\Coh (X)}$ and $B^\vee [3] \in D^{\geq 0}_{\Coh (X)}$, there is a canonical isomorphism of exact triangles in $D^b(X)$ \cite[Lemma 5, IV.4]{GM}
\begin{equation}\label{eq:Scobjcone-eq2}
\xymatrix{
  A[1] \ar[r] \ar[d] & G \ar[r] \ar[d] & B^\vee [3] \ar[r]^{m^\vee [3]} \ar[d] & A[2] \ar[d] \\
  H^{-1}(G)[1] \ar[r] & G \ar[r] & H^0(G) \ar[r]^(.4)w & H^{-1}(G) [2]
}.
\end{equation}
Applying the functor $^\vee[2]$ gives the isomorphism of exact triangles
\begin{equation}\label{eq:Ffunctorpreimageimage}
\xymatrix{
  H^0(G)^\vee [2] \ar[r] \ar[d] & G^\vee [2] \ar[r] \ar[d] & H^{-1}(G)^\vee [1] \ar[r]^{w^\vee [3]} \ar[d] & H^0(G)^\vee [3] \ar[d] \\
  B[-1] \ar[r] & G^\vee [2] \ar[r] & A^\vee [1] \ar[r]^m &
  B
}.
\end{equation}
Since $w^\vee [3]$ is precisely $\wt{\Fc} (G)$, we have shown the essential surjectivity of $\wt{\Fc}$.

\textbf{Step 3.} To show that $\wt{\Fc}$ induces a bijection between the isomorphism classes in $\EE$ and $\Lc$, let us take two objects $E_1, E_2$ in $\EE$ and suppose there is an isomorphism from $\wt{\Fc} (E_2)$ to  $\wt{\Fc} (E_1)$ in $\Lc$, say given by the diagram in $D^b(X)$
\[
\xymatrix{
  H^{-1}(E_2)^\vee [1] \ar[r] \ar[d]^h & H^0(E_2)^\vee [3] \ar[d]^i\\
  H^{-1}(E_1)^\vee [1] \ar[r] & H^0(E_1)^\vee [3]
}
\]
where $h,i$ are isomorphisms.  We can complete the rows of this square to  exact triangles of the form \eqref{eq:Fdef-eq2}
\[
\xymatrix{
 H^0(E_2)^\vee [2] \ar[r] \ar[d]^{i[-1]} & E_2^\vee [2] \ar[r] \ar@{.>}[d]^g & H^{-1}(E_2)^\vee [1] \ar[r] \ar[d]^h & H^0(E_2)^\vee [3] \ar[d]^i\\
 H^0(E_1)^\vee [2] \ar[r] & E_1^\vee [2] \ar[r] & H^{-1}(E_1)^\vee [1] \ar[r] & H^0(E_1)^\vee [3]
};
\]
and then $h, i$ can be completed with an isomorphism $g : E_2^\vee [2]\to E_1^\vee [2]$ to an isomorphism of exact triangles \cite[Corollary 4a, IV.1]{GM}.  Hence $E_1$ and $E_2$ are isomorphic in $D^b(X)$.  This completes the proof of the proposition.

\end{proof}

\subparagraph In the proof of essential surjectivity in Proposition \ref{prop:Fissurjfunctor} (i.e.\ Step 2 of the proof), it is not clear that the construction  taking the object $m$ in $\Lc$ to the object $G$ in $\EE$ is a functor, since the object $C$ is defined up to an isomorphism that is not necessarily canonical. 

\subparagraph The \label{para:wtFcwithH0} degree-zero cohomology functor with respect to the standard t-structure  $H^0 : D^b(X) \to \Coh (X)$ induces a functor $\Mor (D^b(X)) \to \Mor (\Coh (X))$ which we will also denote by $H^0$.  For any object in $\Lc$ of the form
\[
  A^\vee [1] \overset{m}{\to} B,
\]
  its image under $H^0$ is
\[
  H^0(A^\vee [1]) = \EExt^1 (A,\OO_X) \xrightarrow{H^0(m)} B.
\]
Let us use the notation in the proof of Proposition \ref{prop:Fissurjfunctor} and take $C, w$   as in \eqref{eq:Scobjcone-eq1}, \eqref{eq:Scobjcone-eq2}, respectively.  Then from \eqref{eq:Ffunctorpreimageimage}, the morphism of sheaves $H^0(m)$ is surjective if and only if the morphism of sheaves $H^0(w^\vee [3])$ is surjective.

\section{Essential surjectivity of the functor $\Fc$}\label{sec:esssurj}

In this section, we will modify the functor $\wt{\Fc}$ to a new functor $\Fc$, and show  that each object $E \in D^{[-1,0]}_{\Coh (X)}$ of the following types is taken by $\Fc$ to a surjective morphism of sheaves $\EExt^1 (H^{-1}(E),\OO_X) \to H^0(E)^\ast$: PT-semistable objects, dual-PT semistable objects, objects giving rise to $L$-invariants in the sense of Toda \cite{Toda2}, and stable frozen triples in the sense of Sheshmani \cite{sheshmani2016higher}.  In particular, under a coprime assumption on rank and degree, we prove in Theorem \ref{thm:main1} that $\Fc$ restricts to an essentially surjective functor from the category of PT stable objects to a category of surjective morphisms of coherent sheaves.


\begin{lem}\label{lem:surjiffCohleq0vanish}
Suppose $E$ is an object in $\langle \Coh^{\geq 2}(X)[1], \Coh^{\leq 1}(X)\rangle$ and $H^{-1}(E)$ has homological dimension at most 1.  Suppose
\begin{equation}
  H^{-1}(E)[1] \to E \to H^0(E) \overset{w}{\to} H^{-1}(E)[2] \tag{\ref{eq:Fdef-eq1}}
\end{equation}
is the associated canonical exact triangle in $D^b(X)$.  Then
\[
  H^0\wt{\Fc} (E) = H^0(w^\vee [3]) : \EExt^1 (H^{-1}(E),\OO_X) \to H^0(E)^\ast
\]
is a surjection in $\Coh (X)$ if and only if $\Hom_{D^b(X)}(\Coh^{\leq 0}(X),E)=0$.
\end{lem}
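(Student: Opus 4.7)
The plan is to compute the cokernel of $H^0(w^\vee[3])$ explicitly from the long exact sequence of cohomology associated to the dualised triangle, and then use Example \ref{rem:HomCohleq0H3} to identify the vanishing of that cokernel with the desired Hom-vanishing.

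First, I would start from the canonical exact triangle \eqref{eq:Fdef-eq1} and apply $(-^\vee)[2]$ to arrive at \eqref{eq:Fdef-eq2}. Taking cohomology with respect to the standard t-structure, the segment of the long exact sequence around degree $0$ reads
\[
\cdots \to H^2(E^\vee) \to \EExt^1(H^{-1}(E),\OO_X) \xrightarrow{H^0(w^\vee[3])} \EExt^3(H^0(E),\OO_X) \to H^3(E^\vee) \to \EExt^2(H^{-1}(E),\OO_X) \to \cdots
\]
Here I have used that, since $\mathrm{hd}(H^{-1}(E))\leq 1$, the object $H^{-1}(E)^\vee[1]$ lies in $D^{[-1,0]}_{\Coh(X)}$, so its $H^0$ is $\EExt^1(H^{-1}(E),\OO_X)$ and its $H^1$ is $\EExt^2(H^{-1}(E),\OO_X)$.

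The second key input is the hypothesis $\mathrm{hd}(H^{-1}(E)) \leq 1$, which immediately gives $\EExt^2(H^{-1}(E),\OO_X)=0$. Consequently the long exact sequence above truncates to the short exact sequence
\[
\EExt^1(H^{-1}(E),\OO_X) \xrightarrow{H^0(w^\vee[3])} \EExt^3(H^0(E),\OO_X) \to H^3(E^\vee) \to 0,
\]
which identifies $\cokernel H^0(w^\vee[3])$ with $H^3(E^\vee)$. Hence $H^0(w^\vee[3])$ is a surjection in $\Coh(X)$ if and only if $H^3(E^\vee)=0$.

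Finally, since $E \in \langle \Coh^{\geq 2}(X)[1], \Coh^{\leq 1}(X)\rangle \subseteq \langle \Coh^{\geq 1}(X)[1], \Coh^{\leq 1}(X)\rangle$, Example \ref{rem:HomCohleq0H3} applies directly and yields the equivalence $H^3(E^\vee)=0 \Leftrightarrow \Hom_{D^b(X)}(\Coh^{\leq 0}(X),E)=0$, which closes the argument. The only step requiring any real care is the bookkeeping in the long exact sequence — making sure that after the shifts by $[2]$ and $[3]$ the relevant cohomology sheaves are correctly $\EExt^1(H^{-1}(E),\OO_X)$, $\EExt^3(H^0(E),\OO_X)$, and $\EExt^2(H^{-1}(E),\OO_X)$ — but no genuine obstacle arises, because both the homological-dimension hypothesis and Example \ref{rem:HomCohleq0H3} are tailored to kill exactly the terms that would otherwise obstruct surjectivity.
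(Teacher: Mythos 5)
Your proof is correct and follows essentially the same route as the paper: dualise the canonical triangle, read off the long exact sequence of cohomology, use $\mathrm{hd}(H^{-1}(E))\leq 1$ to kill $\EExt^2(H^{-1}(E),\OO_X)$ so that the cokernel of $H^0(w^\vee[3])$ is identified with $H^3(E^\vee)$, and then invoke Example \ref{rem:HomCohleq0H3}. The only (harmless) cosmetic difference is that you write the target as $\EExt^3(H^0(E),\OO_X)$ where the paper writes $H^0(E)^\ast$; these agree in the intended situation where $H^0(E)$ is $0$-dimensional.
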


\begin{proof}
Applying $^\vee[2]$ to \eqref{eq:Fdef-eq1} gives us the exact triangle
\begin{equation}
H^0(E)^\vee[2] \to E^\vee[2] \to H^{-1}(E)^\vee [1] \overset{w^\vee[3]}{\xrightarrow{\hspace{0.8cm}}} H^0(E)^\vee [3] \tag{\ref{eq:Fdef-eq2}},
\end{equation}
which has long exact sequence of cohomology
\begin{equation}\label{eq:AG44-144-1}
\cdots \to \EExt^1 (H^{-1}(E),\OO_X) \overset{H^0(w^\vee [3])}{\xrightarrow{\hspace{1cm}}} H^0(E)^\ast \to H^1(E^\vee [2]) \to \EExt^2 (H^{-1}(E),\OO_X)
\end{equation}
where the last term $\EExt^2 (H^{-1}(E),\OO_X)$ vanishes since $H^{-1}(E)$ has homological dimension at most 1.  Therefore, the morphism $H^0\wt{\Fc} (E) =H^0(w^\vee [3])$ is surjective if and only if $H^3(E^\vee)=H^1(E^\vee [2])=0$.  On the other hand, $H^3 (E^\vee)=0$ if and only if $\Hom (\Coh^{\leq 0}(X),E)=0$ by Example \ref{rem:HomCohleq0H3}, and so we are done.
\end{proof}

\begin{eg}\label{eg:objformainthm}
Suppose $E$ is an object in $\langle \Coh^{=3}(X)[1], \Coh^{\leq 1}(X)\rangle$ that satisfies the vanishing
\[
  \Hom (\Coh^{\leq 0}(X),E)=0.
\]
Note that the category $\Ac:= \langle \Coh^{\geq 2}(X)[1], \Coh^{\leq 1}(X) \rangle$  is the heart of a t-structure on $D^b(X)$  and hence an abelian category \cite[Section 3]{BayerPBSC}.  Also, the subcategory $\Coh^{\leq 0}(X)$ is closed under quotients in $\Ac$.  Hence the vanishing $\Hom (\Coh^{\leq 0}(X),E)=0$ implies $\Hom (\Coh^{\leq 0}(X), H^{-1}(E)[1])=0$, which in turn implies  $H^{-1}(E)$ has homological dimension at most 1 by Lemma \ref{lem:vanishingCoh01}(i).  That is, $E$ satisfies the hypotheses of Lemma \ref{lem:surjiffCohleq0vanish}.  As a result, all of the following objects satisfy the hypotheses of Lemma \ref{lem:surjiffCohleq0vanish} in addition to the vanishing $\Hom (\Coh^{\leq 0}(X),-)=0$:
\begin{itemize}
\item[(a)] Objects  in the left-hand side of the equivalence \eqref{eq:AG44-124-5}, i.e.\ in the category
    \[
     \{ E \in \langle \Coh^{=3}(X)[1], \Coh^{\leq 0}(X)\rangle : \Hom (\Coh^{\leq 0}(X),E)=0\}.
    \]
    These include all the PT-semistable objects (see \ref{para:PTstabobjproperties}).
\item[(b)] Objects  in the right-hand side of the equivalence \eqref{eq:AG44-124-5}, i.e.\ in the category
    \[
      \{ E \in \langle \Coh^{=3}(X)[1], \Coh^{\leq 1}(X)\rangle: \Hom (\Coh^{\leq 1}(X), E) =0 \}.
    \]
    These include all the dual-PT semistable objects (see \ref{para:dualPTstabobjproperties}).
\item[(c)] Objects  in
\[
\{ E \in \langle \Coh_\mu (X), \mathcal{C}_{[0,\infty]} \rangle : \Hom ( \mathcal{C}_{[0,\infty]},E)=0\},
\]
which are  the objects giving rise to the $L$-invariants defined by Toda in proving a   higher-rank DT/PT correspondence in \cite{Toda2}.  Here, we have some fixed ample class $\omega$ on $X$, and $\Coh_\mu (X)$ is the category of all $\mu_\omega$-semistable coherent sheaves $E$ with $\mu_\omega (E) := \omega^2 \ch_1(E)/\ch_0(E) = \mu$.  On the other hand, the category $\mathcal{C}_{[0,\infty]}$ consists of coherent sheaves $F$ supported in dimension at most 1, such that all its  Harder-Narasimhan factors with respect to the slope function $\ch_3(-)/\omega \ch_2(-)$ have slopes lying in the interval $[0,\infty]$.
\end{itemize}
\end{eg}

\paragraph Lemma \ref{lem:surjiffCohleq0vanish} motivates us to define the full subcategory of $\EE$
\[
  \EE_0 = \{ E \in \EE : \Hom_{D^b(X)}(\Coh^{\leq 0}(X),E)=0 \}.
\]
For any coherent sheaf $F$ on $X$, we will also define the full subcategory of   $\Mor (\Coh (X))$
\[
\Sc (F) = \{ F \overset{q}{\to} Q : q \text{ is a surjection in $\Coh (X)$}, Q \in \Coh^{\leq 0}(X)\}.
\]
 For  any $E \in \EE_0$, we have $H^{-1}(E) \in \Coh^{\geq 2}(X)$ by \ref{para:hdleqmeansCohgeq2}, and so $E$ satisfies the hypotheses of Lemma \ref{lem:surjiffCohleq0vanish}.  Since $H^0(E)$ is 0-dimensional, we obtain that   $H^0 \wt{\Fc} (E)$ lies in  the category $\Sc (\EExt^1 (H^{-1}(E),\OO_X))$.  Note that $H^{-1}(E)$ has homological dimension at most 1 from the definition of $\EE$.  This allows us to make the following definition:

\begin{defn} \label{para:mainthmprep1}
We write $\Fc$ to denote the  restriction of $\wt{\Fc}$
\begin{equation}\label{eq:H0Fcfuncrestriction}
  \Fc = (H^0 \circ \wt{\Fc}) |_{\EE_0} : \EE_0 \to \coprod_{\substack{F \in \Coh (X), \, \mathrm{hd}(F)\leq 1}} \Sc (\EExt^1(F,\OO_X)).
\end{equation}
\end{defn}



\begin{eg}[PT semistable objects]\label{eg:PTssinEE0}
Every PT semistable object $E$ of nonzero $\ch_0$ lies in $\EE_0$.  To see this, note that the canonical exact triangle $H^{-1}(E)[1] \to E \to H^0(E) \to H^{-1}(E)[2]$ gives a short exact sequence $0 \to H^{-1}(E)[1] \to E \to H^0(E) \to 0$ in $\Ac^p$.  Since $\Coh^{\leq 0}(X)$ is closed under quotient in $\Ap$, the vanishing $\Hom (\Coh^{\leq 0}(X),E)=0$ implies the vanishing $\Hom (\Coh^{\leq 0}(X), H^{-1}(E)[1])=0$, and so $\mathrm{hd}(H^{-1}(E))\leq 1$ by Lemma \ref{lem:vanishingCoh01}(i).  The claim then follows from \ref{para:PTstabobjproperties}
\end{eg}

\begin{eg}[Sheshmani's frozen triples]\label{eg:stabfrozentripleinEE0}
Every stable frozen triple on a smooth projective threefold $X$ in the sense of Sheshmani \cite{sheshmani2016higher} represents an object in $\EE_0$.  A frozen triple $(G,F,\varphi)$ consists of a locally free sheaf $G \cong \OO_X^{\oplus r}$ on $X$ (for some positive integer $r$) together with a morphism of coherent sheaves $\varphi : G \to F$ where $F$ is pure 1-dimensional.  Such a frozen triple is called stable (or $\tau'$-stable in \cite{sheshmani2016higher}) if the cokernel of $\varphi$ is 0-dimensional.  Let us write $E$ to represent the 2-term complex $[G \overset{\varphi}{\to} F]$ in $D^b(X)$ with $F$ sitting at degree 0.  Then clearly $E \in \Ap$.  For any $T \in \Coh^{\leq 0}(X)$, applying $\Hom (T,-)$ to the  exact triangle in $D^b(X)$
\[
 G\overset{\varphi}{\to} F \to E \to G[1]
\]
gives $\Hom (T,E)=0$ and so $\Hom (\Coh^{\leq 0}(X),E)=0$.  The same argument as in Example \ref{eg:PTssinEE0} then shows $\mathrm{hd}(H^{-1}(E))\leq 1$, and so $E \in \EE_0$.  Note that this example is already implicitly stated in \cite[Example 3.2]{Toda2}.
\end{eg}

\begin{lem}\label{lem:AG44-97-lem1}
Given any $A \in\Coh (X)$ with $\mathrm{hd}(A)\leq 1$ and a morphism of sheaves $r : \EExt^1 (A,\OO_X) \to Q$ where $Q \in \Coh^{\leq 0}(X)$, there exists an object $G$ in $\EE$ such that $H^0 \wt{\Fc} (G)$ is isomorphic to $r$ in  $\Mor(\Coh (X))$.  Moreover, if $r$ is a surjection in $\Coh (X)$, then we can take $G$ to be in $\EE_0$.
\end{lem}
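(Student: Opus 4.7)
My approach is to reduce the lemma to the essential surjectivity of $\wt{\Fc}$ established in Proposition~\ref{prop:Fissurjfunctor}, by first producing, from the sheaf morphism $r : \EExt^1(A, \OO_X) \to Q$, an object of $\Lc$ whose image under $H^0$ is $r$ itself. Since $\mathrm{hd}(A)\leq 1$ we have $A^\vee \in D^{[0,1]}_{\Coh(X)}$, so the canonical truncation triangle reads
\[
H^0(A^\vee)[1] \to A^\vee[1] \to \EExt^1(A,\OO_X) \to H^0(A^\vee)[2].
\]
Applying $\Hom_{D^b(X)}(-,Q)$ and using the vanishing of negative $\Ext$ groups between sheaves to kill $\Hom(H^0(A^\vee)[n],Q) = \Ext^{-n}(H^0(A^\vee),Q)$ for $n = 1,2$, I obtain a natural isomorphism $\Hom(\EExt^1(A,\OO_X),Q) \cong \Hom(A^\vee[1],Q)$ given by precomposition with the truncation map $A^\vee[1] \to \EExt^1(A,\OO_X)$. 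Let $m : A^\vee[1] \to Q$ be the morphism corresponding to $r$ under this bijection; then $m$ is an object of $\Lc$ (with $B := Q$), and $H^0(m) = r$ because $H^0$ of the truncation map is the identity on $\EExt^1(A,\OO_X)$.

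Next, I apply Proposition~\ref{prop:Fissurjfunctor} to obtain some $G \in \EE$ together with an isomorphism $\wt{\Fc}(G) \cong m$ in $\Lc$. Taking $H^0$ of the defining commutative square, using that $H^0(H^0(G)^\vee[3]) = H^0(G)^\ast$ since $H^0(G)$ is $0$-dimensional and that $H^0$ preserves isomorphisms, yields an isomorphism $H^0\wt{\Fc}(G) \cong r$ in $\Mor(\Coh(X))$. This finishes the first statement.

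For the moreover statement, I will invoke Lemma~\ref{lem:surjiffCohleq0vanish} to deduce $G \in \EE_0$ from the surjectivity of $r$. The object $G$ lies in $\EE$, so $\mathrm{hd}(H^{-1}(G)) \leq 1$; by~\ref{para:hdleqmeansCohgeq2} this forces $H^{-1}(G) \in \Coh^{\geq 2}(X)$, while $H^0(G) \in \Coh^{\leq 0}(X) \subset \Coh^{\leq 1}(X)$, so $G \in \langle \Coh^{\geq 2}(X)[1], \Coh^{\leq 1}(X)\rangle$ and the hypotheses of Lemma~\ref{lem:surjiffCohleq0vanish} are satisfied. Since $H^0\wt{\Fc}(G) \cong r$ is surjective by assumption, Lemma~\ref{lem:surjiffCohleq0vanish} gives $\Hom(\Coh^{\leq 0}(X), G) = 0$, i.e.\ $G \in \EE_0$.

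The only step that requires a concrete computation is the $\Hom$-identification in the first paragraph; everything else is a direct application of results already established earlier in the paper, so I do not anticipate any serious obstacle.
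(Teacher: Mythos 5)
Your proof is correct and follows essentially the same route as the paper: both form the composite of $r$ with the canonical truncation map $c : A^\vee[1] \to \EExt^1(A,\OO_X)$ to obtain an object of $\Lc$, invoke the essential surjectivity of $\wt{\Fc}$ from Proposition \ref{prop:Fissurjfunctor}, and deduce the ``moreover'' statement from Lemma \ref{lem:surjiffCohleq0vanish}. The only cosmetic differences are that your $\Hom$-group identification is more than is needed (one only needs $H^0(c)=\mathrm{id}$, so $H^0(rc)=r$), and that you verify the hypotheses of Lemma \ref{lem:surjiffCohleq0vanish} directly from the definition of $\EE$ rather than from the explicit triangle $A[1] \to G \to Q^\vee[3]$ in Step 2 of Proposition \ref{prop:Fissurjfunctor}, which is equally valid.
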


\begin{proof}
Consider the composition of morphisms in $D^b(X)$
\[
  A^\vee [1] \overset{c}{\to} H^0(A^\vee [1]) = \EExt^1 (A,\OO_X) \overset{r}{\to} Q
\]
where $c$ is canonical.  Since $rc \in \Lc$,   the essential surjectivity of $\wt{\Fc}$ from Proposition \ref{prop:Fissurjfunctor}  implies that  there exists some $G \in \EE$ such that $\wt{\Fc} (G) \cong rc$ in $\Lc$, which in turn implies  $r \cong H^0\wt{\Fc} (G)$ in $\Mor(\Coh (X))$.

In fact, from the construction of the object $G$ (see Step 2 of the proof of Proposition \ref{prop:Fissurjfunctor}), we have the exact triangle in $D^b(X)$
\[
  A[1] \to G \to Q^\vee [3] \to A[2]
\]
which gives $H^{-1}(G) \cong A$ and $H^0(G) \cong Q^\ast$, i.e.\ $G$ satisfies the hypotheses of Lemma \ref{lem:surjiffCohleq0vanish}.  Therefore, if $r$ is a surjection in $\Coh (X)$,  the morphism of sheaves $H^0\wt{\Fc} (G)$ is also a surjection in $\Coh (X)$, and Lemma \ref{lem:surjiffCohleq0vanish} implies that $\Hom (\Coh^{\leq 0}(X),G)=0$, meaning $G \in \EE_0$.
\end{proof}

\paragraph We \label{para:Fcrestrictions} have the following restriction of $\Fc$:
\begin{align*}
  \Fc_{\mathrm{tf}} : \{ E \in \EE_0 : H^{-1}(E) \text{ is torsion free }\} &\to \coprod_{\substack{A \in \Coh^{=3}(X), \, \mathrm{hd}(A)\leq 1}} \Sc (\EExt^1(A,\OO_X)) \\
  E &\mapsto \Fc(E).
\end{align*}
If we fix an ample divisor $\omega$ on $X$ and $r \in H^0(X), D \in H^2(X)$ such that $r, \omega^2 D$ are coprime,  then from \ref{para:PTstabobjproperties} we know that an object $E$ with $\ch_0(E)=-r$ and $\ch_1(E)=-D$  lies in $\EE_0$ if and only if it is a PT stable object.  Let us write
\[
 \mathcal{H} = \{ A \in \Coh^{=3}(X): \mathrm{hd}(A)\leq 1, A \text{ is $\mu_\omega$-stable}\}.
\]
Then  we can further restrict $\Fc$ to:
\begin{align*}
  \Fc_{PT,r,D} : \{E \text{ is a PT stable object} : \ch_0(E)=-r, \ch_1(E)=-D\} &\to \coprod_{\substack{A \in \mathcal{H}\\ \ch_0(A)=r, \, \ch_1(A)=D}} \Sc (\EExt^1 (A,\OO_X)) \\
  E &\mapsto \Fc (E)
\end{align*}

\begin{thm}\label{thm:main1}
The functors $\Fc, \Fc_{\mathrm{tf}}$ are essentially surjective.  If $\omega$ is an ample divisor on $X$ and $r \in H^0(X), D \in H^2(X)$ are such that $r,\omega^2 D$ are coprime, then  $\Fc_{PT,r,D}$ is also essentially surjective.
\end{thm}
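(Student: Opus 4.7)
The plan is to deduce all three essential surjectivity statements as corollaries of Lemma \ref{lem:AG44-97-lem1}, by examining the object $G$ that lemma produces and checking that it lies in the smaller domain corresponding to each restricted functor.

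For $\Fc$ itself, there is essentially nothing to do: given any object $A \in \Coh(X)$ with $\mathrm{hd}(A)\leq 1$ and any surjection $r : \EExt^1(A,\OO_X) \twoheadrightarrow Q$ with $Q \in \Coh^{\leq 0}(X)$, Lemma \ref{lem:AG44-97-lem1} directly produces some $G \in \EE_0$ with $H^0\wt{\Fc}(G) \cong r$ in $\Mor(\Coh(X))$, i.e.\ $\Fc(G) \cong r$.

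For $\Fc_{\mathrm{tf}}$, I would revisit the construction in Step 2 of Proposition \ref{prop:Fissurjfunctor} (which underlies Lemma \ref{lem:AG44-97-lem1}), where the exact triangle $A[1] \to G \to Q^\vee[3] \to A[2]$ shows $H^{-1}(G) \cong A$ and $H^0(G) \cong Q^\ast$. Hence if we start with $A \in \Coh^{=3}(X)$ satisfying $\mathrm{hd}(A)\leq 1$ — noting that on a threefold, $\Coh^{=3}(X)$ coincides with the category of torsion-free sheaves — then $H^{-1}(G)$ is automatically torsion-free, so $G$ lies in the domain of $\Fc_{\mathrm{tf}}$.

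The real content of the theorem is the statement for $\Fc_{PT,r,D}$, where the coprime assumption is used. Given $A \in \mathcal{H}$ with $\ch_0(A)=r$, $\ch_1(A)=D$, and a surjection $\EExt^1(A,\OO_X)\twoheadrightarrow Q$ in $\Sc(\EExt^1(A,\OO_X))$, Lemma \ref{lem:AG44-97-lem1} produces $G \in \EE_0$ with $H^{-1}(G)\cong A$ and $H^0(G)\cong Q^\ast$. The canonical triangle $H^{-1}(G)[1]\to G \to H^0(G)\to H^{-1}(G)[2]$ together with $H^0(G)\in\Coh^{\leq 0}(X)$ forces $\ch_0(G)=-r$ and $\ch_1(G)=-D$. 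It remains to verify that $G$ is PT stable. By the coprime criterion recalled in \ref{para:PTstabobjproperties}, it suffices to check properties (i)--(iii) there: (i) $H^{-1}(G)\cong A$ is torsion-free and $\mu_\omega$-(semi)stable since $A \in \mathcal{H}$; (ii) $H^0(G)\cong Q^\ast$ is $0$-dimensional because $Q$ is; and (iii) $\Hom(\Coh^{\leq 0}(X),G)=0$ holds since $G\in\EE_0$. Hence $G$ is PT stable with the required Chern character, proving essential surjectivity of $\Fc_{PT,r,D}$.

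The main organizational obstacle — and the only thing that could go wrong — is making sure the construction of $G$ in Lemma \ref{lem:AG44-97-lem1} actually hands us $H^{-1}(G)\cong A$ on the nose (not just some sheaf with the same derived dual data); this is exactly what the isomorphism of exact triangles \eqref{eq:Ffunctorpreimageimage} in Step 2 of Proposition \ref{prop:Fissurjfunctor} guarantees, which is why the coprime-based characterization in \ref{para:PTstabobjproperties} can be applied verbatim.
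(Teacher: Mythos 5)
Your proposal is correct and follows essentially the same route as the paper: the paper also deduces essential surjectivity of $\Fc$ directly from Lemma \ref{lem:AG44-97-lem1}, and obtains $\Fc_{\mathrm{tf}}$ and $\Fc_{PT,r,D}$ by noting that the constructed object $G$ satisfies $H^{-1}(G)\cong A$, $H^0(G)\cong Q^\ast$ and then invoking the coprime characterization of PT stability from \ref{para:PTstabobjproperties} (via the discussion in \ref{para:Fcrestrictions}). You merely spell out the verification of conditions (i)--(iii) and the Chern character bookkeeping that the paper leaves implicit.
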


\begin{proof}
The essential surjectivity of $\Fc$ follows from Lemma  \ref{lem:AG44-97-lem1}, while that of $\Fc_{\mathrm{tf}}, \Fc_{PT,r,D}$ follows from the essential surjectivity of $\Fc$ itself and the discussion in \ref{para:Fcrestrictions}.
\end{proof}

\section{Fibers of the functor $\Fc$}\label{sec:Fcfiber}

Given an object $E$ of $\EE_0$, the functor $\Fc$ constructed in Section \ref{sec:esssurj} takes $E$ to a surjective morphism of coherent sheaves
\[
  \Fc (E) : \EExt^1 (H^{-1}(E),\OO_X) \to H^0(E)^\ast.
\]
In this section, we answer the following questions:
\begin{enumerate}
\item Given an object $E \in \EE_0$, how do we enumerate all the objects $\ol{E} \in \EE_0$ such that $\Fc (E)$ and $\Fc (\ol{E})$ are isomorphic in $\Mor (\Coh (X))$?
\item Given two objects $E, \ol{E}$ of $\EE_0$ such that $\Fc (E), \Fc (\ol{E})$ are isomorphic in $\Mor (\Coh (X))$, precisely when  are $E, \ol{E}$ isomorphic in $\EE_0$?
\end{enumerate}
These questions are answered in Lemma \ref{lem:AG44-136-1} and Lemma \ref{lem:AG44-135-1}, respectively.

For the purpose of  computing invariants, however, it may help to think of $\Fc (\ol{E})$ as a point of a quot scheme.
\begin{defn}
For any coherent sheaf $F$ on $X$, we will write $\Qc (F)$ to denote the subcategory of $\Mor (\Coh (X))$ where the set of objects is
\[
  \Qc (F) = \{ F \overset{q}{\to} Q : q \text{ is a surjection in $\Coh (X)$}, Q \in \Coh^{\leq 0}(X)\},
\]
and where the morphisms from an object $F \xrightarrow{q_1} Q_1$ to another $F \xrightarrow{q_2} Q_2$ are commutative diagrams in $\Coh (X)$ of the form
\[
\xymatrix{
  F \ar[r]^{q_1} \ar[d]_{1_F} & Q_1 \ar[d]^f \\
  F \ar[r]^{q_2} & Q_2
}.
\]
\end{defn}
A morphism in $\Qc (F)$ as above is an isomorphism if and only if $f$ is an isomorphism in $\Coh (X)$.  Note that $\Qc (F)$ is not  a full subcategory of $\Mor (\Coh (X))$, i.e.\ $\Qc (F)$ has `fewer' arrows than $\Mor (\Coh (X))$.

If we have an isomorphism $E \cong \ol{E}$ in $\EE_0$,  then  this isomorphism induces an isomorphism   $\Fc (E) \cong \Fc (\ol{E})$  in $\Mor (\Coh (X))$ by virtue of $\Fc$ being a functor.  When $H^{-1}(E)=H^{-1}(\ol{E})$, however, it is not necessarily the case that an isomorphism $E \cong \ol{E}$ in $\EE_0$  induces an isomorphism in the category $\Qc (\EExt^1(H^{-1}(E),\OO_X))$.  If $E, \ol{E} \in \EE_0$ satisfy $H^{-1}(E)=H^{-1}(\ol{E})$ and $\Fc (E), \Fc (\ol{E})$ are isomorphic as objects of $\Qc (\EExt^1(H^{-1}(E),\OO_X))$, however, it is indeed true that $E$ and $\ol{E}$ are isomorphic in $D^b(X)$ by  \ref{cor:AG44-137-1}.  We phrase this formally in Lemma \ref{cor:AG44-137-1plus}.

In the last part of this section, we revisit a construction mentioned in \cite[p.3]{gholampour2017higher} in which a surjection of sheaves $\EExt^1 (I_C,\OO_X) \twoheadrightarrow Q$, where $I_C$ is the ideal sheaf of a Cohen-Macaulay curve $C$ on $X$ and $Q$ is a 0-dimensional sheaf, gives rise to a PT stable pair (i.e.\ a rank-one PT stable object).  We generalise this construction to higher ranks, and compare the generalisation with the functor $\Fc$ constructed in Section \ref{sec:esssurj}.

\begin{lem}\label{lem:AG44-136-1}
Fix an element $\ol{E} \in \EE_0$.  The functor
\begin{align*}
\Gc : \{ E \in \EE_0 :\Fc (E) \cong \Fc (\ol{E})\} &\to
\{ A \in \Coh (X): \mathrm{hd}(A)\leq 1, \EExt^1 (A,\OO_X) \cong \EExt^1 (H^{-1}(\ol{E}),\OO_X)\} \notag \\
E &\mapsto H^{-1}(E) \label{eq:functorGdef}
\end{align*}
is essentially surjective.
\end{lem}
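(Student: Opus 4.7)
The plan is to reduce the essential surjectivity of $\Gc$ to a single application of Lemma \ref{lem:AG44-97-lem1}. Given any $A\in\Coh(X)$ with $\mathrm{hd}(A)\leq 1$ together with an isomorphism $\psi:\EExt^1(A,\OO_X)\xrightarrow{\sim}\EExt^1(H^{-1}(\ol{E}),\OO_X)$, I will first form the composition
\[
  r := \Fc(\ol{E})\circ\psi : \EExt^1(A,\OO_X) \twoheadrightarrow H^0(\ol{E})^\ast,
\]
which is again a surjection because $\Fc(\ol{E})$ is; its codomain $H^0(\ol{E})^\ast$ lies in $\Coh^{\leq 0}(X)$ since $H^0(\ol{E})\in\Coh^{\leq 0}(X)$ by the definition of $\EE$.

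Next I will feed the pair $(A,r)$ into Lemma \ref{lem:AG44-97-lem1}. Since $r$ is a surjection onto a 0-dimensional sheaf, that lemma produces an object $G\in\EE_0$ with $\Fc(G)=H^0\wt{\Fc}(G)\cong r$ in $\Mor(\Coh(X))$. Inspection of its proof further exhibits $G$ as sitting in an exact triangle
\[
  A[1] \to G \to (H^0(\ol{E})^\ast)^\vee[3] \to A[2],
\]
which identifies $H^{-1}(G)$ with $A$. Setting $E:=G$ then yields $\Gc(E)=H^{-1}(E)\cong A$, as required.

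It remains to check that this $E$ genuinely belongs to the source category of $\Gc$, i.e.\ that $\Fc(E)\cong\Fc(\ol{E})$ in $\Mor(\Coh(X))$. The commutative square
\[
\xymatrix{
\EExt^1(A,\OO_X)\ar[r]^{r}\ar[d]^{\psi} & H^0(\ol{E})^\ast\ar[d]^{\mathrm{id}} \\
\EExt^1(H^{-1}(\ol{E}),\OO_X)\ar[r]^(.55){\Fc(\ol{E})} & H^0(\ol{E})^\ast
}
\]
has both vertical arrows isomorphisms and thus gives an isomorphism $r\cong\Fc(\ol{E})$ in $\Mor(\Coh(X))$; combining this with $\Fc(E)\cong r$ produces $\Fc(E)\cong\Fc(\ol{E})$.

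I do not foresee a serious obstacle: all the substantive content has already been packaged into Lemma \ref{lem:AG44-97-lem1}, which simultaneously constructs a preimage in $\EE_0$ of any prescribed surjection onto a 0-dimensional sheaf and pins down the $H^{-1}$ of that preimage. The remainder of the argument is a formal diagram comparison in $\Mor(\Coh(X))$.
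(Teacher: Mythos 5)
Your proposal is correct and takes essentially the same route as the paper: both arguments precompose $\Fc(\ol{E})$ with the chosen isomorphism of $\EExt^1$-sheaves and take the object $G\in\EE_0$ produced by the construction underlying Lemma \ref{lem:AG44-97-lem1} (completing the composite with the canonical map $A^\vee[1]\to\EExt^1(A,\OO_X)$ to an exact triangle and dualising), the paper merely inlining that construction and its diagram checks instead of citing the lemma. Your one appeal beyond the statement of Lemma \ref{lem:AG44-97-lem1}, namely reading off $H^{-1}(G)\cong A$ from the triangle $A[1]\to G\to Q^\vee[3]\to A[2]$ in its proof, is precisely the identification the paper's explicit diagrams record, so it is legitimate.
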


\begin{proof}
That $\Gc$ is a functor from the stated domain to the stated codomain  follows from the definitions of $\EE$ and the construction of the functor $\Fc$.  To see the essential surjectivity of $\Gc$, take any $A$ in the codomain of $\Gc$ and fix an isomorphism of sheaves $\alpha : \EExt^1 (A, \OO_X) \to \EExt^1 (H^{-1}(\ol{E}), \OO_X)$.  Let $c$ denote the canonical map $A^\vee [1] \to H^0(A^\vee [1])$.
 The composite map
\begin{equation}\label{eq:AG44-136-eq1}
  A^\vee [1] \overset{c}{\to} H^0(A^\vee [1]) \overset{\alpha}{\to} H^0(H^{-1} (\ol{E})^\vee [1]) \xrightarrow{\Fc(\ol{E})} H^0( \ol{E})^\vee [3]
\end{equation}
 can  be completed to an exact triangle in $D^b(X)$
\[
 A^\vee [1] \xrightarrow{\Fc(\ol{E})\circ \alpha \circ c} H^0( \ol{E})^\vee [3] \to G^\vee [3] \to A^\vee [2]
\]
for some object $G$ (which is unique up to a non-canonical isomorphism \cite[TR3, 1.2]{huybrechts2006fourier}).  Applying $-^\vee [3]$ gives us the second row of the following diagram; the first row is constructed using truncation functors, while the unmarked vertical maps are canonical and induced by the identity map on $G$ \cite[IV.4 Lemma 5b]{GM}:
\[
\xymatrix{
  H^{-1}(G)[1] \ar[d] \ar[r]  & G \ar[d]^1 \ar[r] & H^0(G) \ar[d] \ar[r] & H^{-1}(G)[2] \ar[d]\\
  A[1] \ar[r] & G \ar[r] & H^0(\ol{E}) \ar[r] & A[2]
}.
\]
Applying $-^\vee [2]$ to the entire diagram above now gives the isomorphism of exact triangles
\[
\xymatrix{
  H^0(\ol{E})^\vee [2] \ar[r] \ar[d] & G^\vee [2] \ar[r] \ar[d]^1 & A^\vee [1] \ar[r]^(.45){\Fc(\ol{E})\circ \alpha \circ c} \ar[d] & H^0 (\ol{E})^\vee [3] \ar[d] \\
  H^0(G)^\vee [2] \ar[r] & G^\vee [2] \ar[r] & H^{-1}(G)^\vee [1] \ar[r] & H^0(G)^\vee [3]
}
\]
in which the right-most square factorises as
\begin{equation}\label{eq:AG44-136-eq3}
\xymatrix{
 A^\vee [1] \ar[r]^(.45)c \ar[d] & H^0 (A^\vee [1]) \ar[rr]^{\Fc (\ol{E}) \circ \alpha} \ar[d] && H^0(\ol{E})^\vee [3] \ar[d] \\
 H^{-1}(G)^\vee [1] \ar[r]^(.45){c'} & H^0 (H^{-1}(G)^\vee [1]) \ar[rr]^(.55){(H^0 \wt{\Fc})(G)} && H^0(G)^\vee [3]
}
\end{equation}
through  canonical maps $c, c'$.

Since $\ol{E} \in \EE_0$ by assumption, the morphism of sheaves $\Fc(\ol{E})$, and hence $\Fc(\ol{E}) \circ \alpha$, is surjective.  It follows that $(H^0\wt{\Fc})(G)$ is also surjective by the commutativity of the right-hand square of \eqref{eq:AG44-136-eq3}, and so  $G \in \EE_0$ by Lemma \ref{lem:surjiffCohleq0vanish}.  Hence we can write $(H^0\wt{\Fc})(G)$ as $\Fc (G)$.

Note that in  \eqref{eq:AG44-136-eq3}, the map $c'$ is canonical and all the vertical maps are isomorphisms.  Taking inverses of the vertical maps in the right-hand square in \eqref{eq:AG44-136-eq3}, the following concatenation gives an isomorphism $\Fc(G) \to \Fc(\ol{E})$:
\[
\xymatrix{
H^0( H^{-1}(\ol{E})^\vee [1]) \ar[r]^(.55){\Fc(\ol{E})} & H^0(\ol{E})^\vee [3] \\
H^0 (A^\vee [1]) \ar[u]^\alpha \ar[r]^{\Fc (\ol{E}) \circ \alpha} & H^0(\ol{E})^\vee [3] \ar@{=}[u] \\
 H^0 (H^{-1}(G)^\vee [1]) \ar[r]^(.55){\Fc(G)} \ar[u] & H^0(G)^\vee [3] \ar[u]
}.
\]
That is, $G$ is an object in the domain of the functor $\Gc$ such that $\Gc (G) = H^{-1}(G) \cong A$ (this isomorphism follows from the left-most vertical map in \eqref{eq:AG44-136-eq3}), proving the essential surjectivity of   $\Gc$.
\end{proof}

\subparagraph Given a fixed object $\ol{E} \in \EE_0$, the proof of Lemma  \ref{lem:AG44-136-1} says  we can construct all the objects in  $\{ E \in \EE_0 :\Fc (E) \cong \Fc (\ol{E})\}$ by first going through all the coherent sheaves $A$ of homological dimension at most 1 for which there exists  a sheaf isomorphism $\alpha : \EExt^1 (A,\OO_X) \to \EExt^1 (H^{-1}(\ol{E}),\OO_X)$, and then completing composite maps \eqref{eq:AG44-136-eq1} to exact triangles.

For objects $E, \ol{E} \in \EE_0$, the following lemma gives a comparison between the condition of $E, \ol{E}$ being isomorphic in $D^b(X)$ and the condition of ${\Fc} (E),  \Fc (\ol{E})$ being isomorphic in $\Mor (\Coh (X))$.

\begin{lem}\label{lem:AG44-135-1}
Given $E, \Eb \in \EE_0$, the following are equivalent:
\begin{itemize}
\item[(i)] $E, \Eb$ are isomorphic in $D^b(X)$.
\item[(ii)] There exists an isomorphism $\Fc (E) \to \Fc (\Eb)$ in $\Mor (\Coh (X))$
    \begin{equation}\label{eq:lemAG44-135-1}
    \xymatrix{
      H^0(H^{-1}(\Eb)^\vee [1]) \ar[r]^(.55){\ol{u}} \ar[d]^j & H^0 (\Eb)^\vee [3] \ar[d]^k \\
      H^0(H^{-1}(E)^\vee [1]) \ar[r]^(.55)u & H^0(E)^\vee [3]
    }
    \end{equation}
    and an isomorphism $H^{-1}(\Eb)^\vee [1] \overset{i}{\to} H^{-1}(E)^\vee [1]$ in $D^b(X)$ such that $H^0(i)=j$.
\end{itemize}
\end{lem}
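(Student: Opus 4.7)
The plan is to prove both directions of the equivalence.

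For $(i)\Rightarrow(ii)$, given an isomorphism $f\colon E \to \Eb$ in $D^b(X)$, I would apply truncation and then the derived dual, exactly as in Step 1 of the proof of Proposition~\ref{prop:Fissurjfunctor}, to obtain a morphism of exact triangles whose vertical arrows $H^0(f)^\vee[2]$, $f^\vee[2]$, $H^{-1}(f)^\vee[1]$, $H^0(f)^\vee[3]$ are all isomorphisms. Taking the rightmost square and applying $H^0$ gives the commutative square in (ii), with $k = H^0(f)^\vee[3]$ and $j = H^0(H^{-1}(f)^\vee[1])$. Setting $i = H^{-1}(f)^\vee[1]$ then makes the identity $H^0(i) = j$ tautological.

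For $(ii)\Rightarrow(i)$, the central task is to lift the commutativity of the $\Coh(X)$-square in (ii) to commutativity of the corresponding $D^b(X)$-square
\[
\xymatrix{
H^{-1}(\Eb)^\vee[1] \ar[r]^{\ol{w}^\vee[3]} \ar[d]^i & H^0(\Eb)^\vee[3] \ar[d]^k \\
H^{-1}(E)^\vee[1] \ar[r]^{w^\vee[3]} & H^0(E)^\vee[3]
}
\]
relating the exact triangles \eqref{eq:Fdef-eq2} for $E$ and $\Eb$. The key observation is that $A := H^{-1}(\Eb)^\vee[1]$ lies in $D^{[-1,0]}_{\Coh(X)}$ with $H^{-1}(A) = H^{-1}(\Eb)^\ast$; for $T := H^0(E)^\vee[3] \in \Coh^{\leq 0}(X)$, the vanishing of $\Ext^{-1}$ and $\Ext^{-2}$ between coherent sheaves applied to the canonical truncation triangle $H^{-1}(\Eb)^\ast[1] \to A \to H^0(A) \to H^{-1}(\Eb)^\ast[2]$ forces the bijection
\[
\Hom_{D^b(X)}(H^0(A), T) \xrightarrow{\sim} \Hom_{D^b(X)}(A, T),
\]
so two morphisms $A \to T$ in $D^b(X)$ agree precisely when their $H^0$'s do. Applied to $k\circ \ol{w}^\vee[3]$ and $w^\vee[3]\circ i$: their $H^0$'s are $k\circ \ol{u}$ and $u\circ j$ respectively, which coincide by the commutative square in (ii) together with $H^0(i)=j$. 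Hence the $D^b(X)$-square commutes.

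Once this commutativity is in hand, TR3 applied to the triangles \eqref{eq:Fdef-eq2} (rotated so the given square sits on the right of a morphism of triangles) extends $i$ and $k$ to a morphism of exact triangles, producing a third vertical arrow $g\colon \Eb^\vee[2] \to E^\vee[2]$; since $i$ and $k$ are isomorphisms, \cite[Corollary 4a, IV.1]{GM} forces $g$ to be an isomorphism as well. Finally, applying the contravariant self-equivalence $(-)^\vee[2]$ of $D^b(X)$---which is a self-equivalence because $X$ is smooth, so every object of $D^b(X)$ is perfect and the biduality map is an isomorphism---yields the desired isomorphism $E \cong \Eb$ in $D^b(X)$. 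The main obstacle is precisely the lifting of commutativity from the $H^0$-level to $D^b(X)$; the remaining steps are standard triangulated-category manipulations.
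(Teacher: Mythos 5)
Your proof is correct and takes essentially the same route as the paper: in (ii) $\Rightarrow$ (i) you lift the square to a commutative square in $D^b(X)$ on the maps $w^\vee[3]$, $\ol{w}^\vee[3]$, complete it via TR3 to a morphism of the triangles \eqref{eq:Fdef-eq2}, conclude the third arrow is an isomorphism, and dualise. The only cosmetic difference is how commutativity in $D^b(X)$ is obtained: you use the bijection $\Hom(H^0(A),T)\cong\Hom(A,T)$ for $A\in D^{[-1,0]}_{\Coh(X)}$ and $T\in\Coh^{\leq 0}(X)$, whereas the paper concatenates the given square with the naturality square of the canonical map $A\to H^0(A)$ using $H^0(i)=j$ --- the same underlying fact that morphisms to a degree-zero object factor through $H^0$.
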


\begin{proof}
The implication (i) $\Rightarrow$ (ii) follows from the construction of the functor $\Fc$.

Let us now assume  (ii) holds.  We can concatenate \eqref{eq:lemAG44-135-1} with the commutative square induced by $i$ to form
\[
\xymatrix{
H^{-1} (\Eb)^\vee [1] \ar[r]^(.44){\ol{c}} \ar[d]^i &  H^0(H^{-1}(\Eb)^\vee [1]) \ar[r]^(.55){\ol{u}} \ar[d]^j & H^0 (\Eb)^\vee [3] \ar[d]^k \\
      H^{-1}(E)^\vee [1] \ar[r]^(.44)c & H^0(H^{-1}(E)^\vee [1]) \ar[r]^(.55)u & H^0(E)^\vee [3]
}
\]
where $\ol{c}, c$ are canonical maps.  Since $\Fc (E)={u}$ and $\Fc (\Eb)=\ol{u}$ in our notation, from the construction of $\Fc$ we know the composite maps $\bar{u} \bar{c}, uc$ can be completed to exact triangles with $\ol{E}^\vee [3], E^\vee [3]$:
\begin{equation}
\xymatrix{
H^{-1} (\Eb)^\vee [1] \ar[r]^(.5){\bar{u}\bar{c}} \ar[d]^i &   H^0 (\Eb)^\vee [3] \ar[d]^k \ar[r] & \Eb^\vee [3] \ar@{.>}[d]^l  \ar[r] & H^{-1}(\ol{E})^\vee [2] \ar[d]^{i[1]} \\
      H^{-1}(E)^\vee [1] \ar[r]^(.5){uc}    & H^0(E)^\vee [3] \ar[r] & E^\vee [3]  \ar[r] & H^{-1}(E)^\vee [2]
}
\end{equation}
and then $i, k$ induce an isomorphism $l$ in $D^b(X)$, and so (i) holds.
\end{proof}

\subparagraph Suppose \label{cor:AG44-137-1} $E, \Eb \in \EE_0$ satisfy $H^{-1}(E)=H^{-1}(\Eb)$ and $\Fc (E),  \Fc (\Eb)$ are isomorphic in $\Qc (H^{-1}(E))$.  Then by taking $i=1_{{H^{-1}(E)}^\vee [1]}$ in the proof of Lemma \ref{lem:AG44-135-1}, we see that $E, \Eb$ are isomorphic in $D^b(X)$.  Let us phrase this in a slightly more formal framework in Lemma \ref{cor:AG44-137-1plus} below.

\paragraph Let us define a subcategory $\EE_0'$ of $\EE_0$ where the objects of $\EE_0'$ are the same as those of $\EE_0$, but where a morphism $E \overset{f}{\to} \ol{E}$ in $\EE_0'$ is a morphism in $\EE_0$ with the extra requirement that, in the induced morphism of exact triangles
\[
\xymatrix{
  H^{-1}(E)[1] \ar[r] \ar[d]^{H^{-1}(f)[1]} & E \ar[r] \ar[d]^f & H^0(E) \ar[r] \ar[d] & H^{-1}(E)[2] \ar[d] \\
  H^{-1}(\ol{E})[1] \ar[r] & \ol{E} \ar[r] & H^0(\ol{E}) \ar[r] & H^{-1}(\ol{E})[2]
}
\]
we must have
\[
  H^{-1}(f) = 1_{H^{-1}(E)}.
\]
As a result, if $E, \ol{E}$ are two objects of $\EE_0$ such that $H^{-1}(E) \neq H^{-1}(\ol{E})$, then $\Hom_{\EE_0'}(E,\ol{E}) = \varnothing$, meaning  $\EE_0'$ is a non-full subcategory of $\EE_0$.  It is easy to see  that $\Fc$ restricts to a functor
\[
  \Fc' : \EE_0' \to \coprod_{F \in \Coh (X), \, \mathrm{hd}(F)\leq 1} \Qc (\EExt^1(F,\OO_X))
\]
For a fixed object $\ol{E}$ of $\EE_0'$, we also define the full subcategory of $\EE_0'$
\[
\EE_0'(\ol{E}) := \{ E \in\EE_0' : H^{-1}(E)=H^{-1}(\ol{E}), \Fc' (E) \cong \Fc' (\ol{E}) \text{ in } \Qc (\EExt^1 (H^{-1}(\ol{E}),\OO_X)) \}.
\]

\begin{lem}\label{cor:AG44-137-1plus}
Let $\ol{E}$ be a fixed object of $\EE_0'$.  Then $\Fc'$ further restricts to a functor
\begin{equation*}
 \EE_0'(\ol{E}) \to \Qc (\EExt^1 (H^{-1}(\ol{E}),\OO_X))
\end{equation*}
which induces an injection from the set of isomorphism classes in  the domain  to the set of isomorphism classes in the codomain.
\end{lem}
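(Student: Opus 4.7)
The plan is to verify $\Fc'$ restricts to a functor with the stated domain and codomain, and then to establish injectivity on isomorphism classes by constructing the required isomorphism directly via the triangulated-category axioms, rather than appealing to Lemma \ref{lem:AG44-135-1} (which only yields an isomorphism in $D^b(X)$ without immediate control over its degree $-1$ cohomology).

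For well-definedness on objects, the defining equality $H^{-1}(E)=H^{-1}(\ol{E})$ of $\EE_0'(\ol{E})$ places $\Fc(E)$ into $\Qc(\EExt^1(H^{-1}(\ol{E}),\OO_X))$ automatically; for morphisms $f$ in $\EE_0'(\ol{E})$, the condition $H^{-1}(f)=1$ built into $\EE_0'$ propagates through the construction of $\wt{\Fc}$ and the subsequent $H^0$ to give the identity on the source $\EExt^1(H^{-1}(\ol{E}),\OO_X)$, so $\Fc'(f)$ is a morphism in $\Qc$.

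For the injectivity, suppose $E_1,E_2 \in \EE_0'(\ol{E})$ and an isomorphism $\phi:\Fc'(E_1) \to \Fc'(E_2)$ in $\Qc$ is given, consisting of a sheaf isomorphism $k:H^0(E_1)^\ast \to H^0(E_2)^\ast$ together with the identity on $\EExt^1(H^{-1}(\ol{E}),\OO_X)$.  Since each $H^0(E_i)^\vee[3]$ is a $0$-dimensional sheaf sitting in cohomological degree $0$, and since $H^{-1}(\ol{E})^\vee[1] \in D^{[-1,0]}_{\Coh (X)}$, every morphism in $D^b(X)$ between them is determined by its action on $H^0$; hence $\phi$ lifts uniquely to a commutative square $k \circ w_{E_1}^\vee[3] = w_{E_2}^\vee[3]$ in $D^b(X)$ on the $\wt{\Fc}$-side.

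My approach is then to dualise this square back to the non-dualised side and apply the morphism-of-triangles axiom TR3.  Since $H^{-1}(\ol{E})$ has homological dimension at most $1$ and each $H^0(E_i)$ is a $0$-dimensional sheaf on the smooth threefold $X$, all three sheaves admit finite locally free resolutions, so the biduality maps $F \to F^{\vee\vee}$ are canonical isomorphisms in $D^b(X)$; applying $-^\vee$ to the square above and shifting by $[3]$ therefore transforms it into $w_{E_2} = w_{E_1} \circ \ell$, where $\ell := k^\vee[3]: H^0(E_2) \to H^0(E_1)$ is a sheaf isomorphism.  By TR3, applied to the canonical truncation triangles $H^{-1}(\ol{E})[1] \to E_i \to H^0(E_i) \overset{w_{E_i}}{\to} H^{-1}(\ol{E})[2]$ with outer verticals $1,\ell,1$, I obtain a morphism $f^{-1}: E_2 \to E_1$ in $D^b(X)$ whose cohomology in degree $-1$ is the identity; the $5$-lemma for triangulated categories makes $f^{-1}$ an isomorphism, and inverting gives the required isomorphism $f:E_1 \to E_2$ in $\EE_0'$.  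The main obstacle I anticipate is the precise bookkeeping of the shifts and of the canonical biduality identifications in reversing the commutative square back to the non-dualised side, but once $w_{E_2} = w_{E_1} \circ \ell$ has been verified in $D^b(X)$, the remainder of the proof is a formal diagram chase.
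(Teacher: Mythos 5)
Your proof is correct and follows essentially the same route as the paper: translate the isomorphism in $\Qc(\EExt^1(H^{-1}(\ol{E}),\OO_X))$ into a commuting square of the connecting morphisms $w^\vee[3]$ in $D^b(X)$ (the paper does this by precomposing with the canonical map $c$, you by noting that maps from an object of $D^{[-1,0]}_{\Coh (X)}$ to a degree-zero sheaf are determined by $H^0$ --- the same vanishing of negative Ext groups), then complete to a morphism of the truncation triangles via TR3 and identify $H^{-1}$ of the resulting isomorphism with the identity. The only difference is cosmetic: you dualise the square back and apply TR3 on the untransformed side, whereas the paper applies TR3 on the dual side (to $E^\vee[2]$, $\ol{E}^\vee[2]$) and dualises the resulting isomorphism of triangles afterwards.
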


\begin{proof}
Suppose $E, \ol{E}$ are two objects of $\EE_0'$ such that $H^{-1}(E)=H^{-1}(\ol{E})$ and $\Fc' (E) \cong \Fc' (\ol{E})$ in the category $\Qc (\EExt^1 (H^{-1}(\ol{E}),\OO_X))$.  Then we have a commutative diagram in $D^b(X)$
\[
\xymatrix{
  H^{-1}(E)^\vee [1] \ar[r]^(.4)c \ar[d]^1 & \EExt^1 (H^{-1}(E),\OO_X) \ar[r]^(.58)q \ar[d]^1 & H^0(E)^\vee [3] \ar[d] \\
  H^{-1}(\ol{E})^\vee[1] \ar[r]^(.4)c & \EExt^1 (H^{-1}(\ol{E}),\OO_X) \ar[r]^(.58){\ol{q}} & H^0(\ol{E})^\vee [3]
}
\]
where $c$ is the canonical map, the right-hand vertical arrow is an isomorphism, and $q=\Fc'(E), \ol{q} = \Fc' (\ol{E})$.  From the construction of the functor $\Fc$, we can complete $qc, \ol{q}c$ to exact triangles with $E^\vee [2], \ol{E}^\vee [2]$ and obtain an isomorphism of exact triangles
\[
\xymatrix{
E^\vee [2] \ar[r] \ar@{.>}[d]^{g^\vee [2]} &  H^{-1}(E)^\vee [1] \ar[r]^{qc} \ar[d]^1 & H^0(E)^\vee [3] \ar[d] \ar[r] & E^\vee [3] \ar@{.>}[d]^{g^\vee [3]} \\
\ol{E}^\vee [2] \ar[r] & H^{-1}(\ol{E})^\vee [1] \ar[r]^{\ol{q}c} & H^0(\ol{E})^\vee [3] \ar[r] & E^\vee [3]
}
\]
for some isomorphism $\ol{E} \overset{g}{\to} E$ in $D^b(X)$.  Dualising, we obtain the isomorphism of triangles
\[
\xymatrix{
   H^{-1}(\ol{E})[1] \ar[d]^1 \ar[r] & \ol{E} \ar[d]^g \ar[r] & H^0(\ol{E}) \ar[d] \ar[r] & H^{-1}(\ol{E})[2] \ar[d]^1 \\
   H^{-1}(E)[1] \ar[r] & E \ar[r] & H^0(E) \ar[r] & H^{-1}(E)[2]
}
\]
from which we see $H^{-1}(g)=1_{H^{-1}(\ol{E})}$ by \cite[IV.4 Lemma 5b)]{GM}.  That is, the morphism $g$ is a morphism in the category $\EE_0'$.
\end{proof}

\begin{lem}\label{lem:E0objrankone}
We have
\[
  \{ E \in \EE_0 : \ch_0(E)=-1, \ch_1(E)=0, H^{-1}(E) \text{ is torsion-free}\} = \{ \text{rank-one PT stable objects}\}.
\]
\end{lem}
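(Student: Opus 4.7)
The plan is to verify the two set inclusions via the coprime characterization of PT stability from \ref{para:PTstabobjproperties}, essentially by unwinding the definitions of $\EE_0$ and of rank-one PT stability.

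For the inclusion $\supseteq$, let $E$ be a rank-one PT stable object, so $\ch_0(E) = -1$ and $\ch_1(E) = 0$. Example \ref{eg:PTssinEE0} shows that every PT semistable object with nonzero $\ch_0$ lies in $\EE_0$, while property (i) of \ref{para:PTstabobjproperties} provides torsion-freeness of $H^{-1}(E)$; this gives all the conditions on the left-hand side.

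For the inclusion $\subseteq$, take $E \in \EE_0$ with $\ch_0(E) = -1$, $\ch_1(E) = 0$, and $H^{-1}(E)$ torsion-free. I will verify (i)--(iii) of \ref{para:PTstabobjproperties} together with $E \in \Ac^p$, and then invoke the coprime converse in that same paragraph: coprimality of $\ch_0(E) = -1$ and $\omega^2 \ch_1(E) = 0$ is automatic since $\gcd(1,n) = 1$ for every integer $n$. From the canonical triangle $H^{-1}(E)[1] \to E \to H^0(E) \to H^{-1}(E)[2]$ and the fact that $H^0(E) \in \Coh^{\leq 0}(X)$ has $\ch_0 = \ch_1 = 0$, one gets $\ch_0(H^{-1}(E)) = 1$ and $\ch_1(H^{-1}(E)) = 0$; thus $H^{-1}(E)$ is a rank-one torsion-free sheaf, which is automatically $\mu_\omega$-stable, giving (i). Condition (ii) is the $0$-dimensionality of $H^0(E)$ built into $\EE \supset \EE_0$, and (iii) is the defining vanishing of $\EE_0$. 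For $E \in \Ac^p = \langle \Coh^{\geq 2}(X)[1], \Coh^{\leq 1}(X)\rangle$, the torsion-free sheaf $H^{-1}(E)$ on the smooth projective threefold $X$ lies in $\Coh^{=3}(X) \subset \Coh^{\geq 2}(X)$, and $H^0(E) \in \Coh^{\leq 0}(X) \subset \Coh^{\leq 1}(X)$.

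The proof will be essentially a definition check, so I expect no significant obstacle. The only observation worth emphasizing is that in rank one the coprime hypothesis of \ref{para:PTstabobjproperties} is automatic, which turns the characterization (i)--(iii) into a direct PT-stability criterion.
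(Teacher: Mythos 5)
Your proof is correct and follows essentially the same route as the paper: the containment of PT stable objects in the left-hand set rests on the vanishing $\Hom(\Coh^{\leq 0}(X),H^{-1}(E)[1])=0$ and Lemma \ref{lem:vanishingCoh01}(i) (which you import by citing Example \ref{eg:PTssinEE0}, exactly the argument the paper redoes inline), while the reverse containment is the coprime characterisation of \ref{para:PTstabobjproperties}, which you merely spell out in more detail (automatic coprimality of $(-1,0)$, automatic $\mu_\omega$-stability in rank one, and membership in $\Ac^p$).
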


\begin{proof}
The inclusion from left to right follows from \ref{para:PTstabobjproperties}.  To see the other inclusion, take any rank-one PT stable object $E$.  Then $E \in \Ap$, and the canonical exact triangle \eqref{eq:Fdef-eq1} makes   $H^{-1}(E)[1]$ an $\Ap$-subobject of $E$.  Since $\Coh^{\leq 0}(X)$ is closed under quotient in $\Ap$  and we have the vanishing $\Hom (\Coh^{\leq 0}(X),E)=0$ from the PT stability of $E$, it follows that $\Hom (\Coh^{\leq 0}(X),H^{-1}(E)[1])=0$.   Lemma \ref{lem:vanishingCoh01} then implies $\mathrm{hd}(H^{-1}(E))\leq 1$, and so $E$ lies in the category on the left-hand side.
\end{proof}


\paragraph For \label{para:GKrank1constr} a fixed Cohen-Macaulay curve $C$ on a smooth projective threefold $X$, Gholampour-Kool describes a construction that takes an element of $\Qc (\EExt^1(I_C,\OO_X))$ to a stable pair on $X$ \cite[p.3]{gholampour2017higher}.  This construction   is as follows: given a surjection $q : \EExt^1 (I_C,\OO_X) \to Q$ in $\Coh (X)$ where $Q$ is 0-dimensional, let $K = \kernel (q)$ so that we have a short exact sequence of sheaves
\[
0 \to K \to \EExt^1 (I_C,\OO_X) \overset{q}{\to} Q \to 0.
\]
Taking derived dual and then taking cohomology, and noting that $\EExt^1 (I_C,\OO_X) \cong \OO_C^\ast$ where $\OO_C$ is reflexive, we obtain the short exact sequence
\[
0 \to \OO_C \to K^\ast \to Q^\ast \to 0.
\]
Taking the Yoneda product of the last exact sequence with the structural exact sequence
\begin{equation}\label{eq:structuresesC}
0 \to I_C \to \OO_X \to \OO_C \to 0
\end{equation}
then gives the four-term exact sequence
\[
0 \to I_C \to \OO_X \overset{s}{\to} K^\ast \to Q^\ast \to 0
\]
and hence a stable pair
\begin{equation}\label{eq:AG44-153-4}
\OO_X \overset{s}{\to} F
\end{equation}
 where $F := K^\ast$.

\paragraph The  \label{para:GKrank1constr-higherrk} construction in    \ref{para:GKrank1constr} can be generalised to higher ranks.  This is already mentioned in \cite[Lemma 3.3]{Toda2}, but we describe the details here so that we can compare the construction with our functor $\Fc$:  Suppose $A = H^{-1}(E)$ for some $E \in \EE_0$, and that $A$ is torsion-free with  $A^{\ast \ast}$  locally free.   For any surjection $q : \EExt^1 (A,\OO_X) \to Q$ in $\Coh (X)$ where $Q$ is 0-dimensional, let $K = \kernel (q)$.  We have a natural short exact sequence of sheaves
\begin{equation}\label{eq:AG44-153-1}
0\to A \overset{\beta}{\to} A^{\ast \ast}\overset{\gamma}{\to} T \to 0
\end{equation}
where $T \in \Coh^{\leq 1}(X)$.  Taking derived dual and noting that $A^{\ast \ast}$ is locally free, we obtain the isomorphism
\[
  \EExt^1 (A,\OO_X) \to \EExt^2 (T,\OO_X)
\]
where $\EExt^2 (T,\OO_X)$ is a pure sheaf in $\Coh^{\leq 1}(X)$ by \cite[Proposition 1.1.6]{HL}.  Dualising brings the short exact sequence
\[
0 \to K \to \EExt^1 (A,\OO_X) \overset{q}{\to} Q \to 0
\]
to the short  exact sequence
\[
0 \to \EExt^2(\EExt^1 (A,\OO_X),\OO_X) \overset{\delta}{\to} \EExt^2 (K,\OO_X) \overset{\epsilon}{\to} Q^\ast \to 0
\]
where we have an isomorphism $\EExt^2(\EExt^2 (T,\OO_X),\OO_X) \overset{i}{\to} \EExt^2(\EExt^1 (A,\OO_X),\OO_X)$.  Note that if $T$ is nonzero, then it must be pure 1-dimensional.  To see this, notice that the short exact sequence of sheaves \eqref{eq:AG44-153-1} induces the short exact sequence in $\Ac^p$
\[
0 \to T \to A[1] \to A^{\ast \ast}[1] \to 0.
\]
Since $\Coh^{\leq 0}(X)$ is a Serre subcategory of  $\Ac^p$,  any nonzero 0-dimensional subsheaf $T'$ of $T$ would be an $\Ac^p$-subobject of $A[1]$, hence of $E$, contradicting the vanishing $\Hom (\Coh^{\leq 0}(X),E)=0$ from the definition of $\EE_0$.  Hence when $T$ is nonzero, it is pure 1-dimensional, hence reflexive \cite[Proposition 1.1.10]{HL}, and so the double dual map  $T \overset{\alpha}{\to} \EExt^2(\EExt^2(T,\OO_X),\OO_X)=T^{\ast \ast}$ is an isomorphism.  Putting everything together, the composition of morphisms in $\Coh (X)$
\[
  A^{\ast \ast} \overset{\gamma}{\twoheadrightarrow} T \overset{\alpha}{\to} T^{\ast \ast} =\EExt^2(\EExt^2(T,\OO_X),\OO_X) \overset{i}{\to}  \EExt^2 (\EExt^1 (A,\OO_X),\OO_X) \overset{\delta}{\hookrightarrow} \EExt^2(K,\OO_X)
\]
 has $A$ as the kernel and a 0-dimensional sheaf as the cokernel.  Overall, we have constructed a 2-term complex
  \begin{equation}\label{eq:AG44-153-3}
   A^{\ast \ast} = H^{-1}(E)^{\ast \ast} \xrightarrow{s} \EExt^2 (K,\OO_X)
 \end{equation}
 where $s = \delta i\alpha\gamma$.   When $A=I_C$ is the ideal sheaf of a Cohen-Macaulay curve $C$ on $X$, we have $T=\OO_C$, and the above construction reduces to the construction in \ref{para:GKrank1constr} while the 2-term complex \eqref{eq:AG44-153-3}  coincides with the stable pair \eqref{eq:AG44-153-4}.

Recall that the proof of Lemma \ref{lem:AG44-97-lem1} describes a construction  taking any element of $\Qc (\EExt^1(I_C,\OO_X))$ to a rank-one PT stable object.    The following lemma describes the precise relation between this construction in the proof of Lemma \ref{lem:AG44-97-lem1} and the construction in \ref{para:GKrank1constr-higherrk}.

\begin{lem}\label{lem:inversecompGK}
Let $E$ be an object of $\EE_0$ such that $H^{-1}(E)$ is torsion-free and $H^{-1}(E)^{\ast \ast}$ is locally free.  Suppose $q : \EExt^1 (H^{-1}(E),\OO_X) \to Q$ is a surjection in $\Coh (X)$ where $Q$ is 0-dimensional.  Let $G$ be the object in $\EE_0$ satisfying $\Fc(G) \cong q$ as constructed in the proof of Lemma \ref{lem:AG44-97-lem1}, and let $[H^{-1}(E)^{\ast\ast} \overset{s}{\to} \EExt^2(K,\OO_X)]$ be the 2-term complex \eqref{eq:AG44-153-3} constructed from $q$ as in \ref{para:GKrank1constr-higherrk}.   Then $G$ fits in an exact triangle
\[
  (H^{-1}(E)^\ast)^\vee \overset{\phi}{\to} K^\vee [2] \to G \to (H^{-1}(E)^\ast)^\vee [1],
\]
and $s$ and $G$ are related by $H^0(\phi)=s$.
\end{lem}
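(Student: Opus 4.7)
Set $A=H^{-1}(E)$, so by assumption $A$ is torsion-free with $A^{\ast\ast}$ locally free, and $\mathrm{hd}(A)\leq 1$ since $E\in \EE_0$. The truncation of $A^\vee$ yields the canonical exact triangle
\[
A^\ast[1]\to A^\vee[1]\overset{c}{\to}\EExt^1(A,\OO_X)\to A^\ast[2],
\]
and from the proof of Lemma \ref{lem:AG44-97-lem1}, $G$ is produced by completing the composition $q\circ c$ to a triangle $A^\vee[1]\overset{qc}{\to}Q\to C\to A^\vee[2]$ and setting $G=C^\vee[3]$. First I would apply the octahedron axiom to $q\circ c$, combining the triangle above with the short-exact-sequence triangle $K\to\EExt^1(A,\OO_X)\overset{q}{\to}Q\to K[1]$, to obtain a fourth triangle
\[
A^\ast[2]\to C\to K[1]\to A^\ast[3],
\]
in which the connecting map $K[1]\to A^\ast[3]$ is the composition of the shifted inclusion $K[1]\hookrightarrow\EExt^1(A,\OO_X)[1]$ with the shift of the truncation connecting map $\EExt^1(A,\OO_X)\to A^\ast[2]$. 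Applying the contravariant exact functor $-^\vee[3]$ then rotates this to the required exact triangle
\[
(H^{-1}(E)^\ast)^\vee\overset{\phi}{\to}K^\vee[2]\to G\to (H^{-1}(E)^\ast)^\vee[1].
\]

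To show $H^0(\phi)=s$, I would take the long exact sequence of cohomology of this triangle. Since $A^{\ast\ast}$ is locally free, so is $A^\ast=(A^{\ast\ast})^\ast$; hence $(H^{-1}(E)^\ast)^\vee$ is concentrated in degree $0$ with value $A^{\ast\ast}$. Using $H^i(K^\vee[2])=\EExt^{i+2}(K,\OO_X)$, together with $H^{-1}(G)\cong A$ and $H^0(G)\cong Q^\ast$ recalled from the proof of Lemma \ref{lem:AG44-97-lem1}, the long exact sequence collapses to
\[
0\to\EExt^1(K,\OO_X)\to A\to A^{\ast\ast}\overset{H^0(\phi)}{\longrightarrow}\EExt^2(K,\OO_X)\to Q^\ast\to 0.
\]
By the naturality of the truncation triangle and the octahedron, the map $A\to A^{\ast\ast}$ here agrees with the canonical double-dual injection (checked by comparing the triangle above with the truncation triangle $A[1]\to G\to Q^\ast\to A[2]$ of $G$). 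Consequently $\EExt^1(K,\OO_X)=0$ and $H^0(\phi)$ factors as $A^{\ast\ast}\overset{\gamma}{\twoheadrightarrow}T\to\EExt^2(K,\OO_X)$ through the cokernel $T=A^{\ast\ast}/A$ of \eqref{eq:AG44-153-1}.

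Finally I would identify the induced map $T\to\EExt^2(K,\OO_X)$ with the composition $\delta\circ i\circ\alpha$ from \ref{para:GKrank1constr-higherrk}. This amounts to comparing two descriptions of the same connecting morphism: the one coming from the triangle $(H^{-1}(E)^\ast)^\vee\to K^\vee[2]\to G$, and the one obtained by assembling the isomorphism $\EExt^1(A,\OO_X)\cong\EExt^2(T,\OO_X)$ (derived dual of $0\to A\to A^{\ast\ast}\to T\to 0$, using that $A^{\ast\ast}$ is locally free), the injection $\delta\colon\EExt^2(\EExt^1(A,\OO_X),\OO_X)\hookrightarrow\EExt^2(K,\OO_X)$ (derived dual of $0\to K\to\EExt^1(A,\OO_X)\to Q\to 0$), and the double-dual isomorphism $\alpha$ on the reflexive sheaf $T$. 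The main obstacle will be precisely this last bookkeeping: tracing the octahedron-derived description of $\phi$ through $-^\vee[3]$ and $H^0$ carefully enough to certify that the induced factorisation $T\to\EExt^2(K,\OO_X)$ is $\delta\circ i\circ\alpha$, and not merely some other map with the same source and target assembled from the same pieces.
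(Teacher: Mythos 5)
Your first paragraph is correct and is exactly the paper's route to the triangle: apply the octahedral axiom to $q\circ c$, with the truncation triangle of $A^\vee[1]$ and the triangle of $0\to K\to \EExt^1(A,\OO_X)\overset{q}{\to}Q\to 0$, to get $A^\ast[2]\to C\to K[1]\to A^\ast[3]$, and then apply $-^\vee[3]$. The gap is in the second half. The long exact sequence you write down only determines the kernel and cokernel of $H^0(\phi)$: it shows $H^0(\phi)$ kills $A$ and has cokernel $Q^\ast$, hence factors through some injection $T\hookrightarrow \EExt^2(K,\OO_X)$, but it cannot distinguish that injection from any other map with the same source and target. Since the whole content of the statement $H^0(\phi)=s$ is the identification of the map itself (not of its kernel and cokernel), and you explicitly defer this identification as ``the main obstacle,'' the proof of the second assertion is not actually given. (The auxiliary claim that the boundary map $A\to A^{\ast\ast}$ in your sequence is the canonical double-dual injection is likewise only sketched, though it is not needed for the argument below.)

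Note that your own first paragraph already contains the ingredient that closes this gap, and you then discard it by passing to cohomology of the rotated triangle: the octahedron tells you the connecting map $K[1]\to A^\ast[3]$ is $(a[1])\circ(\iota[1])$, where $\iota:K\hookrightarrow\EExt^1(A,\OO_X)$ and $a:\EExt^1(A,\OO_X)\to A^\ast[2]$ is the truncation map; dualising this composite gives $\phi=(\iota^\vee[2])\circ(a^\vee[2])$, hence $H^0(\phi)=\delta\circ H^0(a^\vee[2])$ with $\delta=\EExt^2(\iota)$ exactly as in \ref{para:GKrank1constr-higherrk}, and it remains only to identify $H^0(a^\vee[2])$ with the surjection $i\alpha\gamma:A^{\ast\ast}\to\EExt^2(\EExt^1(A,\OO_X),\OO_X)$, i.e.\ to compare the truncation triangle of $A^\vee[1]$ with the dual of $0\to A\to A^{\ast\ast}\to T\to 0$ using that $A^{\ast\ast}$ is locally free and $T$ is pure $1$-dimensional (so reflexive). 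This is how the paper proceeds: it applies the octahedral axiom a second time, in the form of \cite[Lemma 1.4.6]{neeman2014triangulated}, to the composition $K^\vee[2]\to G\to Q^\vee[3]$, and takes $H^{-1}$ of the resulting top-right square to obtain the commutative square expressing $H^0(\phi)=\delta\circ H^0(a^\vee[2])$, after which $H^0(a^\vee[2])=i\alpha\gamma$ gives $H^0(\phi)=\delta i\alpha\gamma=s$. To repair your write-up, replace the long-exact-sequence step by the dualised factorisation of $\phi$ and then carry out the comparison of the two descriptions of $a$; as it stands, the proposal establishes the triangle but not the equality $H^0(\phi)=s$.
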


\begin{proof}
Let us use the notation in \ref{para:GKrank1constr-higherrk} and  write $A = H^{-1}(E)$, $K=\kernel (q)$.  Let us also write $c$ to denote the canonical map $A^\vee [1] \to \EExt^1(A,\OO_X)$ as in the proof  of Lemma \ref{lem:AG44-97-lem1}.  Let $C$ be an object that completes $qc$ to an exact triangle
\[
  A^\vee [1] \overset{qc}{\to} Q \to C \to A^\vee [2].
\]
 Applying the octahedral axiom to the composition $qc$ then gives us the diagram
\[
\xymatrix{
& & & A^\ast [2] \ar[ddd] \\
& & & \\
& \EExt^1 (A,\OO_X) \ar[dr]^q \ar[uurr]^a & & \\
A^\vee [1] \ar[rr]^{qc} \ar[ur]^c & & Q \ar[r] \ar[dr] & C  \ar[d] \\
& & & K[1]
}
\]
in which every straight line is an exact triangle.  Taking $-^\vee [3]$ then yields
\[
\xymatrix{
& & & (A^\ast)^\vee [1]  \ar[ddll]_{a^\vee [3]} \\
& & & \\
& (\EExt^1(A,\OO_X))^\vee [3]  \ar[dl]_{c^\vee [3]}  & & \\
A [2]   & & Q^\vee [3] \ar[ll]_{(qc)^\vee [3]} \ar[ul]_{q^\vee [3]} & G   \ar[uuu] \ar[l] \\
& & & K^\vee [2] \ar[ul] \ar[u]
}
\]
where we write $G = C^\vee [3]$.  The vertical line in the last diagram gives an exact triangle
\[
  (A^\ast)^\vee  \overset{\phi}{\to} K^\vee [2] \to G \to (A^\ast)^\vee [1].
\]
Now we apply the formulation of the octahedral axiom in \cite[Lemma 1.4.6]{neeman2014triangulated} to the composition $K^\vee [2] \to G \to Q^\vee [3]$, which yields the diagram
\[
\xymatrix{
K^\vee [2] \ar[r]  \ar[d]^1 & G \ar[d] \ar[r] & (A^\ast)^\vee [1] \ar[d]^{a^\vee [3]} \ar[r]^{\phi [1]} & K^\vee [3] \ar[d]^1 \\
K^\vee [2] \ar[r] \ar[d] & Q^\vee [3] \ar[r] \ar[d] & (\EExt^1(A,\OO_X))^\vee [3] \ar[r] \ar[d] & K^\vee [3] \ar[d] \\
0 \ar[r] \ar[d] & A[2] \ar[r]^1 \ar[d] & A[2] \ar[r] \ar[d] & 0 \ar[d] \\
K^\vee [3] \ar[r] & G[1] \ar[r] & (A^\ast)^\vee [2] \ar[r]^{\phi [2]} & K^\vee [4]
}
\]
in which every row and every column is an exact triangle.  Applying the cohomology functor $H^{-1}$ to the top-right square gives
\[
\xymatrix{
  A^{\ast \ast} \ar[r]^{H^0(\phi)} \ar[d]^{H^0(a^\vee [2])} & \EExt^2 (K,\OO_X) \ar[d]^1 \\
  \EExt^2 (\EExt^1(A,\OO_X),\OO_X) \ar[r]^(.6)\delta & \EExt^2 (K,\OO_X)
}
\]
where $\delta$ is as in \ref{para:GKrank1constr-higherrk}, and $H^0(a^\vee [2])$ is precisely the surjection $i\alpha \gamma$ in \ref{para:GKrank1constr-higherrk}.  Thus $H^0(\phi)$ coincides with $s$, which was constructed as $\delta i \alpha \gamma$.
\end{proof}

\begin{eg}\label{eg:lempairrepresentation}
 Suppose $E$ is an object satisfying the hypotheses of Lemma \ref{lem:inversecompGK}, such that  $H^{-1}(E)$ is not locally free while its dual $H^{-1}(E)^{\ast}$ is locally free.  Using the notation in \ref{para:GKrank1constr-higherrk} and Lemma  \ref{lem:inversecompGK}, we have that $T$ is nonzero and hence pure 1-dimensional by \ref{para:GKrank1constr-higherrk}.  It follows that $\EExt^1 (A,\OO_X)$ and $K$ are both nonzero and pure 1-dimensional, as is $\EExt^2 (K,\OO_X)=K^\ast$.  On the other hand, we have  $(H^{-1}(E)^\ast)^\vee = H^{-1}(E)^{\ast \ast}$.  Hence $\phi = H^0(\phi)$ and the morphism $\phi$ coincides with the morphism $s$ in Lemma  \ref{lem:inversecompGK}.  Exampes of such $E$ include:
\begin{enumerate}
\item $E$ is a rank-one PT stable object.  In this case, we have $H^{-1}(E)=L \otimes I_C$ where $L$ is  some line bundle  and $I_C$ is the ideal sheaf of some Cohen-Macaulay curve $C$ on $X$, so that   $\phi = s : L \to \EExt^2 (K,\OO_X)$.  When $L=\OO_X$, this morphism is a PT stable pair.
\item $E$ is a 2-term complex of coherent sheaves $[G \overset{\varphi}{\to} F]$ in $D^b(X)$ with $F$ sitting at degree 0, and where $(G,F,\varphi)$ is a stable frozen triple (see Example \ref{eg:stabfrozentripleinEE0}).  To see why $H^{-1}(E)$ satisfies the requirements of Lemma \ref{lem:inversecompGK}, consider the short exact sequence of sheaves
    \[
      0 \to H^{-1}(E) \to G \to \image (\varphi) \to 0
    \]
    where $\image \varphi$ is a pure 1-dimensional sheaf.  This gives the isomorphism $\EExt^1(H^{-1}(E),\OO_X) \cong \EExt^2 (\image (\varphi),\OO_X)\neq 0$, and so $H^{-1}(E)$ cannot be locally free.  We also have $H^{-1}(E)^\ast = \EExt^0 (H^{-1}(E),\OO_X) \cong \EExt^0 (G,\OO_X)$, which is locally free.  Hence $E$ satisfies all the requirements in Lemma  \ref{lem:inversecompGK}.
\end{enumerate}
\end{eg}

\bibliography{refs}{}
\bibliographystyle{plain}

\end{document}